\documentclass[preprint]{imsart}

\RequirePackage{amsthm,amsmath,amsfonts,amssymb}
\RequirePackage[authoryear]{natbib}%% uncomment this for author-year citations
\RequirePackage[colorlinks,citecolor=blue,urlcolor=blue]{hyperref}%% uncomment this for coloring bibliography citations and linked URLs

\startlocaldefs

\newtheorem{theo}{Theorem}[section]
\newtheorem{lem}[theo]{Lemma}
\newtheorem{rem}[theo]{Remark}
\newtheorem{cor}[theo]{Corollary}
\newtheorem{proposition}[theo]{Proposition}

\newcommand{\LA}{{$\mathbf{{\cal A}_n}$}}
\newcommand{\LAI}{{$\mathbf{{\cal A}_{n,1}}$}}
\newcommand{\LAII}{{$\mathbf{{\cal A}_{n,2}}$}}
\newcommand{\LB}{{$\mathbf{{\cal B}_n}$}}
\newcommand{\LC}{{$\mathbf{{\cal C}_n}$}}
\newcommand{\LD}{{$\mathbf{{\cal D}_n}$}}
\newcommand{\LDS}{{$\mathbf{{\cal D}_n^{*}}$}}
\newcommand{\LDI}{{$\mathbf{{\cal D}_{n,1}}$}}
\newcommand{\LDII}{{$\mathbf{{\cal D}_{n,2}}$}}
\newcommand{\LE}{{$\mathbf{{\cal E}_n}$}}
\newcommand{\LF}{{$\mathbf{{\cal F}_n}$}}
\newcommand{\LG}{{$\mathbf{{\cal G}_n}$}}
\newcommand{\LH}{{$\mathbf{{\cal H}_n}$}}
\newcommand{\LI}{{$\mathbf{{\cal I}_n}$}}
\newcommand{\LJ}{{$\mathbf{{\cal J}_n}$}}
\newcommand{\LK}{{$\mathbf{{\cal K}_n}$}}
\newcommand{\LL}{{$\mathbf{{\cal L}_n}$}}
\newcommand{\LM}{{$\mathbf{{\cal M}_n}$}}
\newcommand{\LN}{{$\mathbf{{\cal N}_n}$}}
\newcommand{\LO}{{$\mathbf{{\cal O}_n}$}}
\newcommand{\LP}{{$\mathbf{{\cal P}_n}$}}
\newcommand{\LQ}{{$\mathbf{{\cal Q}_n}$}}
\newcommand{\LR}{{$\mathbf{{\cal R}_n}$}}
\newcommand{\LS}{{$\mathbf{{\cal S}_n}$}}

\numberwithin{equation}{section}
\endlocaldefs

\begin{document}

\begin{frontmatter}
%%%%%%%%%%%%%%%%%%%%%%%%%%%%%%%%%%%%%%%%%%%%%%
%%                                          %%
%% Enter the title of your article here     %%
%%                                          %%
%%%%%%%%%%%%%%%%%%%%%%%%%%%%%%%%%%%%%%%%%%%%%%
\title{Asymptotic equivalence for nonparametric additive regression}
%\title{A sample article title with some additional note\thanksref{T1}}
\runtitle{Asymptotic equivalence for nonparametric additive regression}
%\thankstext{T1}{A sample of additional note to the title.}

\begin{aug}
%%%%%%%%%%%%%%%%%%%%%%%%%%%%%%%%%%%%%%%%%%%%%%%
%% Only one address is permitted per author. %%
%% Only division, organization and e-mail is %%
%% included in the address.                  %%
%% Additional information such as            %%
%% identifying the corresponding author must %%
%% be included in in the Acknowledgments     %%
%% section if necessary.                     %%
%% ORCID can be inserted by command:         %%
%% \orcid{0000-0000-0000-0000}               %%
%%%%%%%%%%%%%%%%%%%%%%%%%%%%%%%%%%%%%%%%%%%%%%%
\author[A]{\fnms{Moritz}~\snm{Jirak}\ead[label=e1]{moritz.jirak@univie.ac.at}},
\author[B]{\fnms{Alexander}~\snm{Meister}\ead[label=e2]{alexander.meister@uni-rostock.de}}
\and
\author[C]{\fnms{Angelika}~\snm{Rohde}\ead[label=e3]{angelika.rohde@stochastik.uni-freiburg.de}}
%%%%%%%%%%%%%%%%%%%%%%%%%%%%%%%%%%%%%%%%%%%%%%
%% Addresses                                %%
%%%%%%%%%%%%%%%%%%%%%%%%%%%%%%%%%%%%%%%%%%%%%%
\address[A]{Institut f\"ur Statistik und Operations Research, Universit\"at Wien, Oskar-Morgenstern-Platz 1, 1090 Wien, Austria\printead[presep={,\ }]{e1}}

\address[B]{Institut f\"ur Mathematik, Universit\"at Rostock, Ulmenstra{\ss}e 69, 18057 Rostock, Germany\printead[presep={,\ }]{e2}}

\address[C]{Mathematisches Institut, Albert-Ludwigs-Universit\"at Freiburg, Ernst-Zermelo-Stra{\ss}e 1, 79104 Freiburg, Germany\printead[presep={,\ }]{e3}}
\end{aug}

\begin{abstract}
We prove asymptotic equivalence of nonparametric additive regression and an appropriate Gaussian white noise experiment in which a multidimensional shifted Wiener process is observed, whose dimension equals the number of additive components. The shift depends on the additive components of the regression function and  solely the one- and two-dimensional marginal distributions of the covariates via an explicitly specified bounded but non-compact linear operator~$\Gamma$. The number of additive components $d$ is allowed to increase moderately with respect to the sample size. In the special case of pairwise independent components of the covariates, the white noise model decomposes into $d$ independent univariate processes. Moreover, we study approximation in some semiparametric setting where $\Gamma$ splits into a multiplication operator and an asymptotically negligible Hilbert-Schmidt operator.
\end{abstract}

\begin{keyword}[class=MSC2020]
\kwd[Primary ]{62G08}
\kwd[; secondary ]{62B15}
\end{keyword}

\begin{keyword}
\kwd{Additive model}
\kwd{asymptotic sufficiency}
\kwd{high-dimensional regression}
\kwd{Le Cam distance}
\kwd{white noise models}
\end{keyword}

\end{frontmatter}
%%%%%%%%%%%%%%%%%%%%%%%%%%%%%%%%%%%%%%%%%%%%%%
%% Please use \tableofcontents for articles %%
%% with 50 pages and more                   %%
%%%%%%%%%%%%%%%%%%%%%%%%%%%%%%%%%%%%%%%%%%%%%%
%\tableofcontents

%%%%%%%%%%%%%%%%%%%%%%%%%%%%%%%%%%%%%%%%%%%%%%
%%%% Main text entry area:

\section{Introduction} \label{1}
%%%%%%%%%%%%%%%%%%%%%%%%%%%%%%%%

The classical  nonparametric regression model is described by the statistical experiment with i.i.d.~observations $(X_j,Y_j)$, $j=1,\ldots,n$, that satisfy \begin{equation} \label{eq:1.1}
Y_j \, = \, g(X_j) \, + \, \varepsilon_j\,, \qquad j=1,\ldots,n\,
\end{equation}
for some unknown regression function $g : \mathbb{R}^d \to \mathbb{R}$. Here, the covariates $X_j$ take their values in $\mathbb{R}^d$, $d\geq 1$, and the random regression errors $\varepsilon_j$ fulfill $\mathbb{E}(\varepsilon_j\mid X_j)=0$ and $\mathbb{E}(\varepsilon_j^2\mid X_j)\leq \sigma^2$ almost surely. Throughout this work, we restrict to the standard model in which the $\varepsilon_j$ are assumed to be independent of the $X_1,\ldots,X_n$ and normally distributed with expectation $0$ and fixed variance $\sigma^2$. The regression function $g$ represents the parameter of interest while the $X_j$ possess  a known $d$-variate bounded Lebesgue density $p_X$ which is bounded away from zero on its support $[0,1]^d$.

It is well-known that  under smoothness constraints on $g$, nonparametric estimators  suffer from a so-called curse of dimensionality; precisely their convergence rate equals $n^{-2\beta/(2\beta+d)}$ where $\beta$ denotes the smoothness level of $g$. These rates become slower when the dimension $d$ gets larger. As they are known to be minimax optimal, the only way out of this curse is to impose stronger conditions on $g$. A common way to overcome the slow rates is to restrict to additive models, i.e. the regression function $g$ is assumed to have the structure
\begin{equation} \label{eq:1.2}
g(x) \, = \, \sum_{\ell=1}^d g_\ell(x_\ell)\,, \qquad x=(x_1,\ldots,x_d)\,,
\end{equation}
where the $g_\ell : \mathbb{R}\to\mathbb{R}$ are unknown smooth functions. Indeed, if $g$ is of the form \eqref{eq:1.2}, then all $g_{\ell}$ (and $g$ in particular) can be estimated with their univariate minimax rate $n^{-2\beta/(2\beta+1)}$, provided some mild identifiability requirement. Early interest goes back to the papers of \cite{S1985} and \cite{S1986}, in which these convergence rates were established. Nonparametric estimation methods involving backfitting  were proposed and studied in \cite{L1997}, \cite{HM2004}, and \cite{MP2006}, and for generalized additive models in \cite{YPM2008}. Issues of smoothing parameter selection were considered in e.g. \cite{MP2005}. Optimality results for estimators can be found in \cite{G92}, \cite{HKM2006} and \cite{HM2011}. Moreover, extensions of the standard problem have been addressed, e.g.~to quantile regression in \cite{LMP2010}, recently to non-Euclidean data which take their values in general Hilbert spaces in \cite{JP20}, \cite{JPK21A}, \cite{JPK21B} and \cite{JLMP2022}, to matrix-valued data and Lie groups in \cite{LMP23}, to high-dimensional but sparse models in \cite{GMW2021} and to stochastic optimization in \cite{AT2025}.

To the best of our knowledge, an asymptotically equivalent white noise experiment for nonparametric additive regression models in Le Cam's sense has been missing so far. In nonparametric Le Cam theory, a major goal is to introduce a specific Wiener process, in which the target function occurs in the drift, such that observing this process is asymptotically equivalent to the original nonparametric experiment. Asymptotic equivalence means that the Le Cam distance between the experiments converges to zero as the sample size $n$ tends to infinity. Once this has been shown, basically all asymptotic properties which have been established in one experiment can be taken over to the other experiment. For the basic concept we refer to the books of \cite{LC1986} and \cite{LCY2000}. To explain it briefly: let $P_{X,\theta}$ and $P_{Y,\theta}$, $\theta \in \Theta$, be the competing probability measures of two statistical experiments with a joint parameter space $\Theta$. If two Markov kernels $K_{X,Y}$ and $K_{Y,X}$ exist such that
\begin{align*}
P_{Y,\theta} & \, = \, \int K_{X,Y}(x,\bullet) \, dP_{X,\theta}(x)\,, \\
P_{X,\theta} & \, = \, \int K_{Y,X}(y,\bullet) \, dP_{Y,\theta}(y)\,,
\end{align*}
for all $\theta\in\Theta$, then the experiments are called equivalent. Therein, the Markov kernels must not depend on $\theta$. Now assume that there are two sequences of such experiments indexed by $n$, which can usually be interpreted as the sample size in at least one of the experiments. Then these sequences are called asymptotically equivalent if the total variation distance $\mbox{TV}(\cdot,\cdot)$ satisfies
\begin{align*}
\lim_{n\to\infty} \, \sup_{\theta\in\Theta} &\, \mbox{TV}\Big(P_{Y,\theta,n} \, , \, \int K_{X,Y,n}(x,\bullet) \, dP_{X,\theta,n}(x)\Big) \, = \, 0\,, \\
\lim_{n\to\infty} \, \sup_{\theta\in\Theta} &\, \mbox{TV}\Big(P_{X,\theta,n} \, , \, \int K_{Y,X,n}(y,\bullet) \, dP_{Y,\theta,n}(y)\Big) \, = \, 0\,.
\end{align*}
By a slight abuse of language, we call the experiments asymptotically equivalent rather than the underlying sequences.

For standard nonparametric regression with $d=1$, the first asymptotic equivalence proof has been given in \cite{BL1996}. Other papers have considered the impact on convergence rates (\cite{R2004}), robust nonparametric regression (\cite{CZ2009}) and extensions to non-Gaussian regression errors (\cite{GN1998}, \cite{GN2002}, \cite{CZ2010}), to functional linear regression (\cite{M2011}), to non-regular regression errors with a Poisson point process as the counterpart experiment (\cite{MR2013}), to unknown error variance (\cite{C2007}), to fractional noise (\cite{SH2014}) and to spherical covariates under specific location of these covariates in the fixed design setting (\cite{K2025}). We also mention the paper on the information contained in additional observations (\cite{M1986}) and the fundamental paper for asymptotic equivalence for density estimation (\cite{N1996}). In \cite{C2006} the setting of bivariate covariates ($d=2$) is studied; while \cite{R2008} considers general fixed dimension $d$ under the uniform design density and smoothness classes of periodic functions. Thus, this latter paper also includes additive regression models; however, the smoothness level is assumed to be large with respect to the dimension. Concretely, $\beta>d/2$ is imposed -- transferred to our notation. In the current paper our smoothness conditions on each additive component $g_\ell$ are much less restrictive, and $d=d_n$ is allowed to increase moderately with respect to $n$. In particular, the required smoothness level is independent of the dimension. Furthermore, the white noise model in \cite{R2008}, which involves a $d$-dimensional Wiener process and the multivariate regression function $g$ as the shift, seems less appropriate to exploit the specific additive structure of $g$ which we impose in (\ref{eq:1.2}). Therefore, our target model describes the observation of an $\mathbb{R}^d$-valued stochastic process on the one-dimensional domain $[0,1]$.

In the current work, we provide a rigorous asymptotic equivalence result for nonparametric additive regression: we prove that nonparametric additive regression is asymptotically equivalent in Le Cam's sense to a specific white noise experiment in which one observes a Gaussian process
$$ R_n(t) \, = \, \int_0^t \big[\Gamma^{1/2} \tilde{g}\big](s)\, ds \, + \, \frac{\sigma}{\sqrt{n}} \, \tilde{W}(t)\,, \quad t\in [0,1]\,, $$
with a $d$-dimensional Wiener process $\tilde{W}$, the target function $\tilde{g}(t) := (g_1(t),\ldots,g_d(t))^\top$ and an explicitly specified bounded but non-compact linear operator $\Gamma$ which depends on the two-dimensional marginals of the design density $p_X$ (cf.~Theorem \ref{T:1}). The proof extends over Sections \ref{2}--\ref{FT}. Section~\ref{SCE} contains special cases and extensions. In particular, we prove in Subsection~\ref{SCE.1} that for pairwise independent components of the $d$-dimensional covariates $X_j$, the operator $\Gamma$ degenerates such that the observed process can be decomposed into $d$ independent one-dimensional Gaussian processes for each of the centered components of $\tilde{g}$ and a real-valued Gaussian random variable for the constant shift term. In this special case we can also show asymptotic equivalence to the experiment of $d$ independent univariate regression models for each of the centered component functions and the real-valued Gaussian shift variable (cf.~Theorem \ref{T:2}). Moreover, in some semiparametric framework in Subsection~\ref{SCE.2}, we show that the operator $\Gamma$ can asymptotically be approximated by the componentwise multiplication operator without independence assumptions on the components of the covariates (cf.~Theorem \ref{T:SCE.1}). This is useful to deduce asymptotic minimax lower bounds in plenty of settings
of nonparametric curve estimation. Finally, we address the question whether the underlying statistic can also be approximated under unknown design density (cf.~Remark~\ref{rem: 6.4}).

\section{Regression model and pilot approximation} \label{2}
%%%%%%%%%%%%%%%%%%%%%%%%%%%%%%%%%%%%%%%%%%%%%%%%%%%%%%%%%

%\hyperref[A]{\LA}\label{A}
%\LA \label{A}: The original regression experiment \hyperref[A]{\LA
Our starting point is the following nonparametric additive regression experiment:
\hyperref[A]{\LA}\label{A} describes the observation of $(X_j,Y_j)$, $j=1,\ldots,n$, where (\ref{eq:1.1}) holds true and the regression function $g$ has the shape (\ref{eq:1.2}). All additive components $g_\ell$, $\ell=1,\ldots,d$, are assumed to satisfy the H\"older condition
\begin{align} \nonumber  |g_\ell(x) - g_\ell(y)| & \, \leq \, C \cdot |x-y|^\beta\,, \qquad \forall x,y\in [0,1]\,, \\
\label{eq:Hoelder} |g_\ell(x)| & \, \leq \, C \,, \qquad \forall x\in [0,1]\,, \end{align}
for some global constants $C>0$ and $\beta\in (0,1]$. Thus, the parameter space $\Theta$ contains all functions $\tilde{g}$ with $\tilde{g}(t) := \big(g_1(t),\ldots,g_d(t)\big)^\top$, $t\in [0,1]$, where all $g_\ell$, $\ell=1,\ldots,d$, fulfill (\ref{eq:Hoelder}). The design density $p_X$ of $X_j$ is assumed to be known and therefore excluded from the parameter space. We assume $p_X(x)=0$ for all $x\not\in [0,1]^d$ and $p_X(x) \in [\rho,1/\rho]$ for all $x\in [0,1]^d$ for some fixed global constant $\rho>0$ but we do not impose any kind of smoothness condition on $p_X$.\pagebreak

Our first goal is to approximate \hyperref[A]{\LA} by an experiment with finite-dimensional parameter space. To this aim, we introduce the basis functions
$$ \varphi_{n,k,k'}(x) \, := \, {\bf 1}_{[k/K_n,(k+1)/K_n)}(x_{k'})\,, \qquad x=(x_1,\ldots,x_d)\,, $$
for $k=0,\ldots,K_n-1$, $k'=1,\ldots,d$, with some integer smoothing parameter $K_n$ still to be chosen. Note that $\varphi_{n,k,k'}(x)$ only affects the $k'$-th component, while the others remain untouched. Define the linear hull
$$ \Phi_n \, := \mbox{span}\big(\varphi_{n,k,k'} \, : \, k=1,\ldots,K_n, \, k'=1,\ldots,d\big)\,, $$
as a subspace of $L_2(p_X)$, that is, the Hilbert space which contains all measurable functions $f$ on the domain $[0,1]^d$ with the norm
$$ \|f\|_{p_X} \, := \, \Big(\int |f(x)|^2 \, p_X(x) \, dx\Big)^{1/2}\, < \, \infty\,. $$
We remark that the linear hull of the functions $\varphi_{n,0} :\equiv 1$ and $\varphi_{n,k,k'}$, $k=1,\ldots,K_n-1$, $k'=1,\ldots,d$, also coincides with $\Phi_n$ since $\varphi_{n,0} = \sum_{k=0}^{K_n-1} \varphi_{n,k,k'}$ for all $k'=1,\ldots,d$. On the other hand, these functions are linearly independent (see the proof Lemma \ref{L:ON}), and hence $\Phi_n$ has dimension $$K_n^* :=  1+d\cdot(K_n-1).$$ Let $\psi_{n,1},\ldots,\psi_{n,K_n^*}$ denote a specific orthonormal basis of $\Phi_n$ with respect to the inner product $\langle\cdot,\cdot\rangle_{p_X}$ of $L_2(p_X)$, satisfying the inequality in the following Lemma \ref{L:ON}. Its proof is deferred to Section \ref{sec: 8}.

\begin{lem} \label{L:ON}
There exists an orthonormal basis $\psi_{n,j}$, $j=1,\ldots,K_n^*$, of $\Phi_n$ with respect to the inner product of $L_2(p_X)$ such that
$$ \sum_{j=1}^{K_n^*} \psi_{n,j}^2(x) \, \leq \, \rho^{-1} \cdot \big\{1 + K_n\cdot d \cdot (1 + \pi^2/6)\big\}\,, \qquad \forall x\in [0,1]^d\,.$$
\end{lem}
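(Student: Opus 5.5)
The plan is to use the Christoffel-function characterization of $\sum_j \psi_{n,j}^2$. This reduces the claim to a pointwise bound on functions in $\Phi_n$ relative to their $L_2(p_X)$-norm. For \emph{any} orthonormal basis $\psi_{n,1},\ldots,\psi_{n,K_n^*}$ of a finite-dimensional subspace $\Phi_n\subset L_2(p_X)$, Cauchy--Schwarz applied to the coefficient vector gives
$$ \sum_{j=1}^{K_n^*} \psi_{n,j}^2(x) \, = \, \sup\Big\{ f(x)^2 \, : \, f\in\Phi_n,\ \|f\|_{p_X}\le 1\Big\}\,. $$
The supremum is attained at $f=\sum_j\psi_{n,j}(x)\psi_{n,j}/(\sum_j\psi_{n,j}^2(x))^{1/2}$. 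Hence the left-hand side is basis independent, and it suffices to bound $f(x)^2/\|f\|_{p_X}^2$ uniformly over $f\in\Phi_n\setminus\{0\}$ and $x\in[0,1]^d$. Since $p_X\ge\rho$ on $[0,1]^d$, we have $\|f\|_{p_X}^2\ge\rho\,\|f\|_{\lambda}^2$, where $\|\cdot\|_\lambda$ is the $L_2$-norm with respect to Lebesgue measure on $[0,1]^d$. This produces the factor $\rho^{-1}$ and leaves a purely Lebesgue problem, in which $p_X$ no longer appears.

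For the Lebesgue problem, I will write every $f\in\Phi_n$ as $f(x)=c+\sum_{\ell=1}^d f_\ell(x_\ell)$. Here each $f_\ell$ is a step function on the grid $\{[k/K_n,(k+1)/K_n)\}$ with $\int_0^1 f_\ell=0$; the constant parts are absorbed into $c$. Under the product (uniform) measure, the coordinates $x_1,\ldots,x_d$ are independent. Hence the constants and the centered functions $f_1(x_1),\ldots,f_d(x_d)$ are mutually orthogonal, and
$$ \|f\|_\lambda^2 \, = \, c^2+\sum_{\ell=1}^d\int_0^1 f_\ell^2(t)\,dt\,. $$
This orthogonal decomposition also settles the linear independence claim made before the lemma. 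If a combination of $\varphi_{n,0}$ and $\varphi_{n,k,k'}$, $k\ge1$, vanishes, then each centered component and the constant vanish. A step function on $[0,1]$ that is constant and has zero coefficient on the cells $k\ge1$ must then be zero, so all coefficients are zero. Consequently $\Phi_n$ is the orthogonal sum of the constants (dimension $1$) and $d$ spaces $V_\ell$ of centered step functions in $x_\ell$ (each of dimension $K_n-1$).

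The Christoffel function of an orthogonal sum is the sum of the Christoffel functions of the summands, since an orthonormal basis of the sum is the union of orthonormal bases of the summands. The constants contribute $1$. For a step function $h$ on a grid of mesh $1/K_n$, we have $h(t)^2=K_n\int_{I(t)}h^2\le K_n\int_0^1h^2$, where $I(t)$ is the cell containing $t$. Therefore each $V_\ell$ contributes at most $K_n$. Altogether
$$ \sum_j\psi_{n,j}^2(x)\;\le\;\rho^{-1}\big(1+dK_n\big)\;\le\;\rho^{-1}\big\{1+K_n d(1+\pi^2/6)\big\}\,. $$
This is even slightly stronger than the statement; the authors' constant $\pi^2/6$ presumably comes from an explicit basis construction that we do not need.

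The main point requiring care is the first step: the $\psi_{n,j}$ are orthonormal in $L_2(p_X)$, not in the Lebesgue space, so one cannot directly use a Lebesgue-orthonormal basis. The variational (sup) formula is what makes the change of measure legitimate. It holds because $\Phi_n$ is finite dimensional and consists of bounded functions, so point evaluation is well defined on it; since $p_X$ is bounded below on all of $[0,1]^d$, the bound holds for every $x$, not just almost every $x$.
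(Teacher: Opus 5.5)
Your proof is correct, and it takes a genuinely different route from the paper's.

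\textbf{The paper's route.} The paper builds an explicit Lebesgue-orthonormal basis of $\Phi_n$: the constant $1$ plus Helmert-type contrasts $\tilde\psi_{n,k,k'}$ formed from $\mathbf{1}_{[0,k/K_n)}(x_{k'})$ and $\mathbf{1}_{[k/K_n,(k+1)/K_n)}(x_{k'})$. It notes that the $L_2(p_X)$-Gram matrix of this basis satisfies $\mathrm{Gr}_n\succeq\rho I_{K_n^*}$, because $p_X\ge\rho$. It then orthonormalizes via the spectral decomposition of $\mathrm{Gr}_n$, which gives $\sum_j\psi_{n,j}^2(x)\le\rho^{-1}\sum_k\tilde\psi_{n,k}^2(x)$. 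Finally it bounds the squared contrasts crudely, dropping the factor $(1+1/k)^{-1}$ and bounding $\sum_k k^{-2}$ by $\pi^2/6$; this is where the constant $1+\pi^2/6$ comes from.

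\textbf{How the two compare.} Your first step is the same comparison written variationally. For any basis, with $\tilde\psi(x)$ the vector of basis values, $\sum_j\psi_{n,j}^2(x)=\tilde\psi(x)^\top\mathrm{Gr}_n^{-1}\tilde\psi(x)$. The matrix inequality $\mathrm{Gr}_n^{-1}\preceq\rho^{-1}I_{K_n^*}$ is exactly your inequality $\|f\|_{p_X}^2\ge\rho\|f\|_\lambda^2$.

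Your formulation buys two things. First, basis independence: every $L_2(p_X)$-orthonormal basis of $\Phi_n$ (and likewise of $\Phi_n^*$) satisfies the bound, so the ``specific'' basis the paper insists on is unnecessary. Second, an essentially exact evaluation of the Lebesgue part through the orthogonal splitting. Each $V_\ell$ contributes exactly $K_n-1$: the full step space in $x_\ell$ has Christoffel function $K_n$, and the constants account for $1$. So your argument actually yields $\rho^{-1}K_n^*=\rho^{-1}\{1+d(K_n-1)\}$. The paper's route is more hands-on and constructive but loses a constant.

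\textbf{A small correction to your last paragraph.} Validity for every $x$ does not come from $p_X$ being bounded below on all of $[0,1]^d$; the norm comparison only needs $p_X\ge\rho$ almost everywhere. It comes from two facts.
\begin{enumerate}
\item You take as representatives the linear combinations of $1$ and $\varphi_{n,k,k'}$, $k\ge1$; these are unique by the linear independence you proved.
\item The cellwise bound $h(t)^2\le K_n\int_0^1h^2$ holds at every $t$.
\end{enumerate}
This includes $t=1$: the half-open cells do not contain $1$, so in this representation $h$ takes at $t=1$ its value on $[0,1/K_n)$. The paper glosses over the same boundary technicality.
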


Note that, for $K_n\geq 2$ and fixed dimension $d$, the upper bound in Lemma \ref{L:ON} is of magnitude ${\cal O}(K_n^*)$, where the constant factor depends only on $\rho$. By $g^{[K_n^*]}$, we denote the orthogonal projection of $g$ onto $\Phi_n$ in the Hilbert space $L_2(p_X)$; thus
$$ g^{[K_n^*]} \, = \, \sum_{k=1}^{K_n^*} \langle g , \psi_{n,k}\rangle_{p_X} \cdot \psi_{n,k}\,. $$

Now we introduce the experiment \hyperref[B]{\LB}\label{B}, which coincides with \hyperref[A]{\LA} except that the regression function $g$ is replaced by $g^{[K_n^*]}$.

\smallskip
In order to keep the conditions as general as possible, the dimension $d=d_n$ of the covariates is allowed to potentially increase in $n$.
\begin{proposition}
    The experiments  \hyperref[A]{\LA} and \hyperref[B]{\LB} are asymptotically equivalent if\begin{equation} \label{eq:AB}
\lim_{n\to\infty}\, d_n\cdot n\cdot K_n^{-2\beta} \, = \, 0\,.
\end{equation}
\end{proposition}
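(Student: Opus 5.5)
Both experiments share the same sample space and the same design distribution, and the parameter $\tilde g$ enters only through the regression function. The natural Markov kernels are therefore the identity in both directions. It then suffices to show that
$$\sup_{\tilde g\in\Theta}\mbox{TV}\big(P_{{\cal A},\tilde g,n},P_{{\cal B},\tilde g,n}\big)\to 0 .$$
I will bound the total variation by the Hellinger distance, or by $\sqrt{\mathrm{KL}/2}$ via Pinsker's inequality. Conditionally on $X_1,\dots,X_n$, the two laws are products of Gaussians $N(g(X_j),\sigma^2)$ and $N(g^{[K_n^*]}(X_j),\sigma^2)$. The $X_j$ have the same law $p_X$ in both experiments. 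The Kullback--Leibler divergence therefore equals
$$\frac{n}{2\sigma^2}\,\mathbb{E}\big(g(X_1)-g^{[K_n^*]}(X_1)\big)^2=\frac{n}{2\sigma^2}\,\|g-g^{[K_n^*]}\|_{p_X}^2 .$$
Hence TV is at most $\frac{\sqrt n}{2\sigma}\|g-g^{[K_n^*]}\|_{p_X}$, and the task reduces to a uniform approximation bound in $L_2(p_X)$.

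Since $g^{[K_n^*]}$ is the orthogonal projection onto $\Phi_n$, $\|g-g^{[K_n^*]}\|_{p_X}\le \|g-h\|_{p_X}$ for any $h\in\Phi_n$. I will choose the piecewise constant additive approximant
$$h(x)=\sum_{\ell=1}^d\sum_{k=0}^{K_n-1} g_\ell(k/K_n)\,\varphi_{n,k,\ell}(x),$$
which lies in $\Phi_n$. (The span is indexed slightly inconsistently in the text, but by the remark on $\varphi_{n,0}$ it contains all $\varphi_{n,k,k'}$ for $k=0,\ldots,K_n-1$.) The Hölder condition \eqref{eq:Hoelder} gives $|g_\ell(x_\ell)-g_\ell(\lfloor K_n x_\ell\rfloor/K_n)|\le C K_n^{-\beta}$.

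The main point is to get the factor $d_n$ rather than $d_n^2$. A crude pointwise bound gives $|g-h|\le d\,CK_n^{-\beta}$ and hence $n\|g-h\|^2\lesssim d^2 nK_n^{-2\beta}$, which is not enough under \eqref{eq:AB}. To improve on this, I would try to exploit cancellations between the components. One option is to choose, within each cell, the local mean of $g_\ell$ instead of the left endpoint value. Another is to shift each $g_\ell$ by a constant, which is absorbed by $\varphi_{n,0}$ and the identifiability of the constant. Either choice makes the errors $r_\ell(x_\ell)=g_\ell(x_\ell)-h_\ell(x_\ell)$ approximately centered. I would then expand
$$\|\textstyle\sum_\ell r_\ell\|_{p_X}^2=\sum_\ell\|r_\ell\|^2+\sum_{\ell\ne\ell'}\langle r_\ell,r_{\ell'}\rangle_{p_X}.$$
The diagonal part is at most $dC^2K_n^{-2\beta}$. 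For the cross terms, the bivariate marginals of $p_X$ are bounded above and below by powers of $\rho$, but they do not factorize, so the cross terms are not automatically small. This step is where I expect the real obstacle. If no cancellation argument works, I would fall back on the bound $\rho^{-1}\big(\sum_\ell\|r_\ell\|_{L_2(dx)}\big)^2$ and check whether the intended condition actually permits $d^2$. Failing that, I would use a sharper choice of $h$: the $L_2(p_X)$-projection of each $g_\ell$ onto step functions in the $\ell$-th coordinate, together with a Cauchy--Schwarz or Riesz-type bound on the Gram structure of the $\Phi_n$ decomposition. The Gram structure is bounded uniformly in $d$ by $\rho$-dependent constants, similar to the estimate underlying Lemma \ref{L:ON}. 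Summing, I expect $n\|g-g^{[K_n^*]}\|^2\lesssim C^2\rho^{-c}\,d_n nK_n^{-2\beta}\to0$ uniformly over $\Theta$, which proves the proposition.
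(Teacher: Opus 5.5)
Your reduction is sound and parallels the paper. Bounding the total variation via Pinsker by $\frac{\sqrt n}{2\sigma}\|g-g^{[K_n^*]}\|_{p_X}$, instead of the paper's Hellinger/Jensen route, is equivalent for this purpose. Using that the projection is the best approximation in $\Phi_n$ is also exactly right. The gap is the step you flag yourself: you never prove $\|g-h\|_{p_X}^2\lesssim d_nK_n^{-2\beta}$; you only list candidate strategies and your fallbacks do not suffice. The bound $\rho^{-1}(\sum_\ell\|r_\ell\|)^2$ gives $d_n^2\,nK_n^{-2\beta}$, which need not tend to zero under (\ref{eq:AB}) when $d_n\to\infty$. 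The ``Riesz-type bound on the Gram structure'' is left as an expectation rather than an argument.

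The missing idea is that you need no cancellation under $p_X$ at all, since only an upper bound is required. Use $p_X\le\rho^{-1}$ on $[0,1]^d$ \emph{before} expanding the square:
\[
\Big\|\sum_{\ell=1}^d r_\ell\Big\|_{p_X}^2\;\le\;\rho^{-1}\int_{[0,1]^d}\Big(\sum_{\ell=1}^d r_\ell(x_\ell)\Big)^2\,dx .
\]
Under Lebesgue measure on $[0,1]^d$ the coordinates are independent, so each cross term factors as $\int_0^1 r_\ell\cdot\int_0^1 r_{\ell'}$. Take $h_\ell$ to be the cellwise means $K_n\int_{k/K_n}^{(k+1)/K_n}g_\ell(y)\,dy$, which is your first option. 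Then each $r_\ell$ integrates to zero over every cell, hence over $[0,1]$, and all cross terms vanish exactly. The H\"older condition gives $|r_\ell|\le CK_n^{-\beta}$ pointwise, so $\|g-g^{[K_n^*]}\|_{p_X}^2\le\rho^{-1}d_nC^2K_n^{-2\beta}$, which is precisely (\ref{eq:2.2}) in the paper.

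Your left-endpoint choice also works if you subtract the Lebesgue mean of each $r_\ell$ and absorb the sum of these constants into $\varphi_{n,0}$. The non-factorizing bivariate marginals of $p_X$ that worried you never enter the argument.
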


\begin{proof}
The Le Cam distance between \hyperref[A]{\LA} and \hyperref[B]{\LB} converges to zero whenever the expected squared Hellinger distance between the $n$-dimensional Gaussian distributions (conditional on the design) $N\big(\{g(X_j)\}_{j} , \sigma^2 I_n\big)$ and $N\big(\{g^{[K_n^*]}(X_j)\}_{j}, \sigma^2 I_n\big)$ does so uniformly with respect to the target parameter $g$. Here, $I_n$ denotes the $n$-dimensional identity matrix, and we write $\mathbb{H}(\cdot,\cdot)$ for the Hellinger distance. This can be seen as follows: the total variation distance $\mbox{TV}(\mathbb{P}_{X,Y},\mathbb{P}_{X,Z})$ between the probability measures $\mathbb{P}_{X,Y}$ and $\mathbb{P}_{X,Z}$ of some random variables $(X,Y)$ and $(X,Z)$, respectively, can be written as
\begin{align*} \frac12\sup_{\|f\|_\infty=1} \big|\mathbb{E} f(X,Y) - \mathbb{E}f(X,Z)\big| & \, \leq \, \frac12 \mathbb{E} \sup_{\|f\|_\infty=1} \big|\mathbb{E}[f(X,Y)|X] - \mathbb{E}[f(X,Z)|X]\big| \\ & \, = \, \frac12 \mathbb{E}\, \mbox{TV}\big(\mathbb{P}_{Y|X},\mathbb{P}_{Z|X}\big) \\ & \, \leq \, \big\{\mathbb{E}\, \mathbb{H}^2\big(\mathbb{P}_{Y|X},\mathbb{P}_{Z|X}\big)\big\}^{1/2}\,.
\end{align*}
Specifically,
\begin{align} \nonumber
\sup_g & \, \mathbb{E}\, \mathbb{H}^2\big\{N\big(\{g(X_j)\}_{j=1,\ldots,n} , \sigma^2 I_n\big)\, , \, N\big(\{g^{[K_n^*]}(X_j)\}_{j=1,\ldots,n} , \sigma^2 I_n\big)\} \\ \nonumber
& \, \leq \, 2\, \sup_g \, \Big\{ 1 - \exp\Big(- \frac1{8\sigma^2} \sum_{j=1}^n \mathbb{E} \big|g(X_j) - g^{[K_n^*]}(X_j)\big|^2\Big)\Big\} \\
& \label{eq:Hell}\, = \, 2\, \sup_g \, \Big\{ 1 - \exp\Big(- \frac{n}{8\sigma^2} \big\|g - g^{[K_n^*]}\big\|_{p_X}^2\Big)\Big\}
%\label{eq:Hell}
%& \, \leq \, 2\cdot \Big\{1  -  \exp\Big( - \frac1{8 \sigma^2\rho} \cdot C^2\cdot d \cdot n \cdot K_n^{-2\beta}\Big)\Big\}\,,
\end{align}
by  Jensen's inequality.
As $g^{[K_n^*]}$ is the best approximation of $g$ within $\Phi_n$, we obtain together with \eqref{eq:1.2} and \eqref{eq:Hoelder},
\begin{align} \nonumber
\big\| g& - g^{[K_n^*]}\big\|_{p_X}^2 \\ \nonumber & \, \leq \, \idotsint \Big|\sum_{j=1}^d g_j(x_j) \, - \, \sum_{j=1}^{d} \sum_{k=0}^{K_n-1} K_n \cdot\int_{k/K_n}^{(k+1)/K_n} g_j(y_j) dy_j \cdot  \varphi_{n,k,j}(x)\Big|^2 p_X(x) \, dx \\ \nonumber
& \, \leq \, \frac1\rho \, \sum_{j=1}^d \sum_{k=0}^{K_n-1} \int_{k/K_n}^{(k+1)/K_n} \Big|g_j(x_j) - K_n\cdot \int_{k/K_n}^{(k+1)/K_n} g_j(y_j) dy_j\Big|^2 dx_j \\ \label{eq:2.2}
& \, \leq \, \frac1\rho\cdot d\cdot C^2 \cdot K_n^{-2\beta}\, .		
\end{align}
Plugging this bound into \eqref{eq:Hell} reveals the claim.
\end{proof}

\section{Dimension reduction} \label{3}
%%%%%%%%%%%%%%%%%%%%%%%%%%%%%%%%

\paragraph*{Sufficiency in \hyperref[B]{\LB}} The conditional likelihood function of the observations $Y_1,\ldots,Y_n$ in the experiment \hyperref[B]{\LB}, given the covariates $X_1,\ldots,X_n$, is equal to
\begin{align*}  f_{B,n}(\tilde{g};y_1,\ldots,y_n)  \, &= \, (2\pi\sigma^2)^{-n/2}\, \prod_{j=1}^n \exp\big\{-\big(y_j - g^{[K_n^*]}(X_j)\big)^2/(2\sigma^2)\big\} \\
 \, &= \, (2\pi\sigma^2)^{-n/2}\, \exp\Big(-\frac1{2\sigma^2}\sum_{j=1}^n y_j^2\Big) \cdot \exp\Big(- \frac1{2\sigma^2} \sum_{j=1}^n \big|g^{[K_n^*]}(X_j)\big|^2\Big) \\ &  \hspace{4.2cm}\cdot \exp\Big(\frac1{\sigma^2}  \sum_{k=1}^{K_n^*} \langle g,\psi_{n,k}\rangle_{p_X}  \sum_{j=1}^n y_j \cdot \psi_{n,k}(X_j)\Big)\,.
\end{align*}
Then the Neyman-Fisher lemma yields that the covariates $X_1,\ldots,X_n$ along with
$$ Z_n \, := \, \Big(\frac1n \sum_{j=1}^n Y_j \cdot \psi_{n,k}(X_j)\Big)_{k=1,\ldots,K_n^*}\, $$
form a sufficient statistic in the experiment \hyperref[B]{\LB}.\\ % \pagebreak

Therefore, \hyperref[B]{\LB} is equivalent to the experiment \hyperref[C]{\LC}\label{C}, in which only this statistic is observed.

\paragraph*{Standardization of \hyperref[C]{\LC}} The conditional distribution of $Z_n$ given $X_1,\ldots,X_n$ is Gaussian with expectation vector $\widehat{M}_n G_n$, and covariance matrix $\sigma^2 \cdot \widehat{M}_n / n$, where
$$ \widehat{M}_n \, := \, \big\{ \langle \psi_{n,k},\psi_{n,k'}\rangle_{X,n} \big\}_{k,k'=1,\ldots,K_n^*}\quad \text{ and }\quad
 G_n \, := \, \{\langle g,\psi_{n,k}\rangle_{p_X}\big\}^\top_{k=1,\ldots,K_n^*}\,, $$
with $\langle f,\tilde{f} \rangle_{X,n} := \frac1n \sum_{j=1}^n f(X_j) \tilde{f}(X_j)$. The fully observed random matrix $\widehat{M}_n$ is symmetric and positive semi-definite, and hence admits the decomposition $\widehat{M}_n = \widehat{U}_n^\top \widehat{\Lambda}_n \widehat{U}_n$, with orthogonal and real-valued matrix $\widehat{U}_n$, and diagonal matrix $\widehat{\Lambda}_n$ with nonnegative entries. Therefore, observing $X_1,\ldots,X_n$ and $Z_n$ is equivalent to observing $X_1,\ldots,X_n$ and
$$ Z_n' \, := \, \widehat{\Lambda}_n \widehat{U}_n G_n \, + \, \frac{\sigma}{\sqrt{n}} \, \widehat{\Lambda}_n^{1/2} \eta_n\,, $$
where $\eta_n$ denotes a $K_n^*$-dimensional random vector with i.i.d.~standard Gaussian components, which is independent of the covariates $X_1,\ldots,X_n$. Those components of $Z_n'$, for which the corresponding diagonal entries of $\widehat{\Lambda}_n$ vanish, are just zero. Instead,  one may generate independent centered Gaussian random variables (ancillary random variables) with variance $\sigma^2/n$ (independently of the $X_1,\ldots,X_n$ and the other components of $Z_n'$). All other components of $Z_n'$ may be divided by the corresponding diagonal entries of $\widehat{\Lambda}_n^{1/2}$ so that, without losing any information on $\tilde{g}$, we arrive at the observation of $X_1,\ldots,X_n$ and
$$ Z_n'' \, = \,  \widehat{\Lambda}_n^{1/2} \widehat{U}_n G_n \, + \, \frac{\sigma}{\sqrt{n}} \, \eta'_n\,, $$
where $\eta'_n$ has the same distribution as $\eta_n$ and is also independent of $X_1,\ldots,X_n$. Note that $\widehat{U}_n^\top Z_n'' = \widehat{M}_n^{1/2} G_n \, + \, \sigma \eta''_n / \sqrt{n}$ with another independent copy $\eta''_n$ of $\eta'_n$. \\

Hence, the experiment \hyperref[C]{\LC} is equivalent to the experiment \hyperref[D]{\LD}\label{D}, in which one observes $X_1,\ldots,X_n$ and -- conditionally on these covariates -- a $K_n^*$-dimensional Gaussian random vector with  mean vector $$\widehat{M}_n^{1/2} G_n$$ and  covariance matrix $\sigma^2 I_{K_n^*} / n$ where we write $I_{K_n^*}$ for the $K_n^*\times K_n^*$-identity matrix.

\section{Localization and removal of the covariates} \label{Loc}
%%%%%%%%%%%%%%%%%%%%%%%%%%%%%%%%%%%%%%%%%%%%%%%%%%%%%%

The next goal is to replace the random matrix $\widehat{M}_n^{1/2}$ by the identity $I_{K_n^*}$ in experiment \hyperref[D]{\LD} and to remove the covariates $X_1,\ldots,X_n$.

\subsection{Localization procedure}
It follows from the definition of $\widehat{M}_n$ that $\mathbb{E} \widehat{M}_n = I_{K_n^*}$, since the $\psi_{n,k}$, $k=1,\ldots,K_n^*$, form an orthonormal system with respect to $\langle\cdot,\cdot\rangle_{p_X}$. To proceed, a localization step is required. Concretely, we split the original experiment \hyperref[A]{\LA} into two independent experiments \hyperref[A1]{\LAI}\label{A1} and \hyperref[A2]{\LAII}\label{A2}, where \hyperref[A1]{\LAI} describes the observation of the regression data $(X_j,Y_j)$, $j=1,\ldots,m := \lfloor n/2\rfloor$, while \hyperref[A2]{\LAII} contains all remaining data $(X_j,Y_j)$, $j=m+1,\ldots,n$.

\smallskip
The transformation and approximation steps of the previous sections may be applied to the data from \hyperref[A1]{\LAI} and \hyperref[A2]{\LAII} separately so that we arrive at the experiment \hyperref[DS]{\LDS}\label{DS}, which consists of two independent parts \hyperref[DI]{\LDI}\label{DI} and \hyperref[DII]{\LDII}\label{DII}:

\smallskip
Under \hyperref[DI]{\LDI}, we observe $X_1,\ldots,X_m$ and the random vector $Z_{n,1}$ which is conditionally Gaussian with the expectation vector $$\widehat{M}_{n,1}^{1/2} G_n$$ and the covariance matrix $\sigma^2 I_{K_n^*} / m$ given $X_1,\ldots,X_m$.
    Accordingly, under \hyperref[DII]{\LDII}, the observations are $X_{m+1},\ldots,X_n$ and $Z_{n,2}$ having the conditional Gaussian distribution with the expectation vector $$\widehat{M}_{n,2}^{1/2} G_n$$ and the covariance matrix $\sigma^2 I_{K_n^*} / (n-m)$ given $X_{m+1},\ldots,X_n$. Therein,
\begin{align*}
\widehat{M}_{n,1} & \, := \, \Big\{\frac1m \sum_{j=1}^m \psi_{n,k}(X_j) \psi_{n,k'}(X_j)\Big\}_{k,k'=1,\ldots,K_n^*}\,, \\
\widehat{M}_{n,2} & \, := \, \Big\{\frac1{n-m} \sum_{j=m+1}^n \psi_{n,k}(X_j) \psi_{n,k'}(X_j)\Big\}_{k,k'=1,\ldots,K_n^*}\, .
\end{align*}
Note that, under the condition (\ref{eq:AB}), the experiments \hyperref[A]{\LA} and \hyperref[DS]{\LDS} are asymptotically equivalent.

\smallskip
Now we introduce a pilot estimator $\widehat{G}_{n,1}$ of $G_n$ in the experiment \hyperref[DI]{\LDI}, whose specification is deferred to Subsection \ref{SS:pilot}. For notational convenience, fix $$\widehat{g}_{n,1} := \sum_{k=1}^{K_n^*} (\widehat{G}_{n,1})_k \cdot \psi_{n,k},
$$
with $(\widehat{G}_{n,1})_k$ the $k$th component of $\widehat{G}_{n,1}$ so that $ G_n - \widehat{G}_{n,1} = \big\{\langle g - \widehat{g}_{n,1} , \psi_{n,k}\rangle_{p_X}\big\}_{k=1,\ldots,K_n^*}\, . $\\

Easily, we find that experiment \hyperref[DS]{\LDS} is equivalent to experiment \hyperref[E]{\LE}\label{E} in which one observes $X_1,\ldots,X_n$, $Z_{n,1}$ and $$Z_{n,2}^* := Z_{n,2} - \widehat{M}_{n,2}^{1/2} \widehat{G}_{n,1} + \widehat{G}_{n,1}$$ by just adding the observed vector $\widehat{G}_{n,1} - \widehat{M}_{n,2}^{1/2} \widehat{G}_{n,1}$ to $Z_{n,2}$ in order to transform from \hyperref[DS]{\LDS} to \hyperref[E]{\LE}; and subtracting it for the reverse transform. \\

Therein, it is essential that $\widehat{G}_{n,1} - \widehat{M}_{n,2}^{1/2} \widehat{G}_{n,1}$ is measurable with respect to the $\sigma$-field generated by $X_1,\ldots,X_n$ and $Z_{n,1}$. \\

Moreover, we define the experiment \hyperref[F]{\LF}\label{F} in which we observe $X_1,\ldots,X_n$, $Z_{n,1}$ and $\zeta_{n,2}$, which obeys the Gaussian distribution with the expectation vector $G_n$ and the covariance matrix $\sigma^2 I_{K_n^*} / (n-m)$ and which is independent of $X_1,\ldots,X_n$ and $Z_{n,1}$.

\begin{proposition}\label{prop: 2}
    The experiments \hyperref[E]{\LE} and \hyperref[F]{\LF} are asymptotically equivalent if
    \begin{equation} \label{eq:EF}
\lim_{n\to\infty} \, K_n^* \cdot \sup_g \, \mathbb{E} \|g - \widehat{g}_{n,1}\|_{p_X}^2 \, = \, 0\, .
\end{equation}
\end{proposition}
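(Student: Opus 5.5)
We compare the two experiments directly, using the identity Markov kernel in both directions. Both experiments observe $X_1,\ldots,X_n$ and $Z_{n,1}$ with the same joint law, so it suffices to bound the expected total variation (equivalently the expected squared Hellinger distance) between the conditional laws of $Z_{n,2}^*$ and $\zeta_{n,2}$, given $\mathcal{F}_n := \sigma(X_1,\ldots,X_n,Z_{n,1})$, uniformly in $g$. The reduction follows the computation already used in the proof that \hyperref[A]{\LA} and \hyperref[B]{\LB} are equivalent: the TV distance between the joint laws is bounded by $\{\mathbb{E}\,\mathbb{H}^2(\cdot\mid\mathcal{F}_n)\}^{1/2}$.

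Conditionally on $\mathcal{F}_n$, the vector $Z_{n,2}^*$ is Gaussian. Its mean is
\[
\widehat{M}_{n,2}^{1/2}G_n-\widehat{M}_{n,2}^{1/2}\widehat{G}_{n,1}+\widehat{G}_{n,1}=G_n+(\widehat{M}_{n,2}^{1/2}-I_{K_n^*})(G_n-\widehat{G}_{n,1}),
\]
and its covariance is $\sigma^2 I_{K_n^*}/(n-m)$. Here we use that $Z_{n,2}$ is conditionally independent of $Z_{n,1}$ and $X_1,\ldots,X_m$ given $X_{m+1},\ldots,X_n$, and that $\widehat{G}_{n,1}$ is $\mathcal{F}_n$-measurable. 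The law of $\zeta_{n,2}$ is Gaussian with mean $G_n$ and the same covariance. The Hellinger formula for Gaussians with equal covariance then gives
\[
\mathbb{E}\,\mathbb{H}^2\le 2\,\mathbb{E}\Big\{1-\exp\Big(-\tfrac{n-m}{8\sigma^2}\big\|(\widehat{M}_{n,2}^{1/2}-I)(G_n-\widehat{G}_{n,1})\big\|^2\Big)\Big\}\le \tfrac{n}{4\sigma^2}\,\mathbb{E}\big\|(\widehat{M}_{n,2}^{1/2}-I)(G_n-\widehat{G}_{n,1})\big\|^2 .
\]

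The key structural point is that $\widehat{M}_{n,2}$ depends only on $X_{m+1},\ldots,X_n$, while $G_n-\widehat{G}_{n,1}$ depends only on the first half of the data. They are therefore independent, and we may condition on the first half to write
\[
\mathbb{E}\|(\widehat{M}_{n,2}^{1/2}-I)v\|^2\le \mathbb{E}\,\|\widehat{M}_{n,2}^{1/2}-I\|_{\mathrm{op}}^2\cdot\|v\|^2 .
\]
For the matrix factor we use $\|A^{1/2}-I\|_{\mathrm{op}}\le\|A-I\|_{\mathrm{op}}$ for positive semidefinite $A$; this follows from $|\sqrt{\lambda}-1|\le|\lambda-1|$ for $\lambda\ge0$. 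We then bound $\|\cdot\|_{\mathrm{op}}$ by the Frobenius norm:
\[
\mathbb{E}\|\widehat{M}_{n,2}-I\|_F^2=\frac{1}{n-m}\sum_{k,k'}\operatorname{Var}\big(\psi_{n,k}\psi_{n,k'}(X)\big)\le\frac{1}{n-m}\,\mathbb{E}\Big(\sum_k\psi_{n,k}^2(X)\Big)^2\le\frac{1}{n-m}\,\sup_x\sum_k\psi_{n,k}^2(x)\cdot K_n^* ,
\]
where the last step uses $\mathbb{E}\sum_k\psi_{n,k}^2(X)=K_n^*$. By Lemma \ref{L:ON}, the supremum is $\mathcal{O}(K_n d/\rho)=\mathcal{O}(K_n^*)$. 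This gives $\mathbb{E}\|\widehat{M}_{n,2}-I\|_{\mathrm{op}}^2\lesssim (K_n^*)^2/n$.

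Combining, and using Parseval for the orthonormal basis, $\|G_n-\widehat{G}_{n,1}\|^2=\|(g-\widehat{g}_{n,1})^{[K_n^*]}\|_{p_X}^2\le\|g-\widehat{g}_{n,1}\|_{p_X}^2$, we obtain
\[
\mathbb{E}\,\mathbb{H}^2\lesssim (K_n^*)^2\sup_g\mathbb{E}\|g-\widehat{g}_{n,1}\|_{p_X}^2 .
\]
This is one factor $K_n^*$ too many compared with \eqref{eq:EF}, and closing this gap is the main obstacle. The Frobenius bound on the operator norm is too crude. Instead we bound
\[
\mathbb{E}\|(\widehat{M}_{n,2}-I)v\|^2=\frac{1}{n-m}\,\mathbb{E}\Big[\sum_k\psi_{n,k}^2(X)\big(\textstyle\sum_{k'}\psi_{n,k'}(X)v_{k'}\big)^2\Big]
\]
directly for fixed $v$ (up to subtracting the mean, which only helps). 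By Lemma \ref{L:ON} this is at most
\[
\frac{C K_n^*}{n-m}\,\mathbb{E}\big(v^\top\psi(X)\big)^2=\frac{C K_n^*}{n-m}\|v\|^2 ,
\]
using orthonormality. To pass from $\widehat{M}_{n,2}$ to its square root we would use the matrix inequality $(A^{1/2}-I)^2\preceq(A-I)^2$, which holds because $A$ and $I$ commute spectrally, so that $\|(A^{1/2}-I)v\|^2\le\|(A-I)v\|^2$. Applying this with $v=G_n-\widehat{G}_{n,1}$, made legitimate by the independence of the two halves of the sample, yields
\[
\mathbb{E}\,\mathbb{H}^2\lesssim K_n^*\,\sup_g\mathbb{E}\|g-\widehat{g}_{n,1}\|_{p_X}^2\to0
\]
by \eqref{eq:EF}, uniformly in $g$.
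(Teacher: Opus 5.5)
Your proof is correct and follows essentially the same route as the paper. Both arguments use the conditional Hellinger bound, the eigenvalue inequality $|\sqrt{\lambda}-1|\le|\lambda-1|$ to replace $\widehat{M}_{n,2}^{1/2}$ by $\widehat{M}_{n,2}$, independence of the two halves of the sample, and the variance computation bounded via Lemma \ref{L:ON} and orthonormality; the Frobenius-norm detour you abandon is superfluous and can simply be deleted from the write-up.
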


\begin{proof}
Using the techniques from (\ref{eq:Hell}), the Le Cam distance between the experiments \hyperref[E]{\LE} and \hyperref[F]{\LF} converges to zero whenever
\begin{align*}
\sup_g & \, \mathbb{E} \, \mathbb{H}^2\big\{N\big(\widehat{M}_{n,2}^{1/2}G_n - \widehat{M}_{n,2}^{1/2} \widehat{G}_{n,1} + \widehat{G}_{n,1} , \sigma^2 I_{K_n^*} / (n-m)\big) \, , \, N\big(G_n , \sigma^2 I_{K_n^*} / (n-m)\big)\big\}  \\
& \, \leq \, 2 \, \sup_g \, \Big\{1 - \exp\Big(-\frac{n-m}{8\sigma^2} \, \mathbb{E} \big\|\big(\widehat{M}_{n,2}^{1/2} - I_{K_n^*}) (G_n - \widehat{G}_{n,1})\big\|^2\Big)\Big\}\,
\end{align*}
does so, where $\|\cdot\|$ stands for the Euclidean distance. Let $\widehat{\lambda}_{M,2,\ell}$ and $\widehat{m}_{2,\ell}$, $\ell=1,\ldots,K_n^*$, denote the eigenvalues and eigenvectors of the symmetric and positive semi-definite matrix $\widehat{M}_{n,2}$, respectively. Then, for all $v \in \mathbb{R}^{K_n^*}$,
\begin{align*} \big\|\big(\widehat{M}_{n,2}^{1/2} - I_{K_n^*}) v\big\|^2 & \, = \, \sum_{\ell=1}^{K_n^*} (v^\top \widehat{m}_{2,\ell})^2 \cdot \big(\widehat{\lambda}_{M,2,\ell}^{1/2} - 1\big)^2 \\ & \, \leq \, \sum_{\ell=1}^{K_n^*} (v^\top \widehat{m}_{2,\ell})^2 \cdot \big(\widehat{\lambda}_{M,2,\ell} - 1\big)^2 \, = \, \big\|\big(\widehat{M}_{n,2} - I_{K_n^*}) v\big\|^2\,.
\end{align*}
Therefore, it is sufficienct to ensure that the subsequent term (\ref{eq:transf1}) converges to zero as $n\to\infty$ uniformly with respect to $g$. Using independence of $\widehat{M}_{n,2}$ and $\widehat{G}_{n,1}$ as well as Lemma \ref{L:ON} in the last step,  we obtain
\begin{align} \nonumber
  (n-m)  \cdot  &\mathbb{E}  \big\|\big(\widehat{M}_{n,2} - I_{K_n^*}) (G_n - \widehat{G}_{n,1})\big\|^2 \\ \nonumber &  =  (n-m) \, \sum_{k=1}^{K_n^*} \mathbb{E} \Big| \frac1{n-m} \sum_{j=m+1}^n \psi_{n,k}(X_j) \sum_{k'=1}^{K_n^*} \langle g - \widehat{g}_{n,1} , \psi_{n,k'}\rangle_{p_X} \psi_{n,k'}(X_j)  \\ \nonumber & \hspace{9cm} -  \langle g-\widehat{g}_{n,1} , \psi_{n,k}\rangle_{p_X}\Big|^2 \\ \nonumber
 & \, = \, \sum_{k=1}^{K_n^*} \mathbb{E}\, \mbox{var}\Big(\psi_{n,k}(X_n) \sum_{k'=1}^{K_n^*} \langle g - \widehat{g}_{n,1} , \psi_{n,k'}\rangle_{p_X} \psi_{n,k'}(X_n) \, \big| \, \widehat{g}_{n,1}\Big) \\ \nonumber
& \, \leq \, \sum_{k=1}^{K_n^*} \mathbb{E}\, \psi_{n,k}^2(X_n) \Big|\sum_{k'=1}^{K_n^*} \langle g - \widehat{g}_{n,1} , \psi_{n,k'}\rangle_{p_X} \psi_{n,k'}(X_n)\Big|^2  \\
\nonumber & \, = \, \mathbb{E} \big|\big(g-\widehat{g}_{n,1}\big)^{[K_n^*]}(X_n)\big|^2\cdot \sum_{k=1}^ {K_n^*} \psi_{n,k}^2(X_n)   \\ \label{eq:transf1}
& \, \leq \, \rho^{-1} \cdot\big\{1 + K_n\cdot d_n \cdot (1+\pi^2/6)\big\} \cdot \mathbb{E}\|g - \widehat{g}_{n,1}\|_{p_X}^2\,.
\end{align}
The claim then follows with the definition of $K_n^*$.\end{proof}

In the experiment \hyperref[F]{\LF}, we then construct a pilot estimator $\widehat{G}_{n,2}$ of $G_n$ based on $\zeta_{n,2}$ and transform \hyperref[F]{\LF} into the experiment \hyperref[G]{\LG}\label{G}, where one observes $X_1,\ldots,X_n$, $\zeta_{n,2}$ and $$\zeta_{n,1}^* := Z_{n,1} - \widehat{M}_{n,1}^{1/2} \widehat{G}_{n,2} + \widehat{G}_{n,2}.
$$

Again, since $\widehat{G}_{n,2} - \widehat{M}_{n,1}^{1/2} \widehat{G}_{n,2}$ is measurable with respect to the $\sigma$-field generated by $X_1,\ldots,X_n$ and $\zeta_{n,2}$, we conclude equivalence of \hyperref[F]{\LF} and \hyperref[G]{\LG}. \\

In the experiment \hyperref[H]{\LH}\label{H}, one observes $X_1,\ldots,X_n$, $\zeta_{n,2}$ and $\zeta_{n,1}$ which is independent of $X_1,\ldots,X_n$ and $\zeta_{n,2}$ and has a Gaussian distribution with the expectation vector $G_n$ and the covariance matrix $\sigma^2 I_{K_n^*} / m$.

\smallskip
With the notation $\widehat{g}_{n,2} := \sum_{k=1}^{K_n^*} (\widehat{G}_{n,2})_k \cdot \psi_{n,k}$, we obtain
\begin{proposition}
    The experiments \hyperref[G]{\LG} and \hyperref[H]{\LH} are asymptotically equivalent if
    \begin{equation} \label{eq:GH}
\lim_{n\to\infty} \, K_n^* \cdot \sup_g \, \mathbb{E} \|g - \widehat{g}_{n,2}\|_{p_X}^2 \, = \, 0\, .
\end{equation}
\end{proposition}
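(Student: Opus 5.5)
The argument mirrors the proof of Proposition~\ref{prop: 2}, with the roles of the two subsamples swapped. In \hyperref[G]{\LG} and \hyperref[H]{\LH} the observations $X_1,\ldots,X_n$ and $\zeta_{n,2}$ have the same joint law, and $\widehat{G}_{n,2}$ is a function of $\zeta_{n,2}$ only. So the experiments differ only in the conditional law of the remaining component given $(X_1,\ldots,X_n,\zeta_{n,2})$.

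In \hyperref[G]{\LG} this component is $\zeta_{n,1}^*$, which is Gaussian with mean $\widehat{M}_{n,1}^{1/2}G_n - \widehat{M}_{n,1}^{1/2}\widehat{G}_{n,2} + \widehat{G}_{n,2}$ and covariance $\sigma^2 I_{K_n^*}/m$. In \hyperref[H]{\LH} it is $\zeta_{n,1}$, which is Gaussian with mean $G_n$ and the same covariance. The conditioning argument preceding (\ref{eq:Hell}) bounds the total variation by the square root of the expected squared Hellinger distance between these conditional laws; the identity kernels then give the Le Cam bound. This reduces the claim to showing that
$$\sup_g \; m\cdot \mathbb{E}\,\big\|\big(\widehat{M}_{n,1}^{1/2}-I_{K_n^*}\big)(G_n-\widehat{G}_{n,2})\big\|^2 \;\longrightarrow\; 0 .$$
As in the proof of Proposition~\ref{prop: 2}, the spectral decomposition of $\widehat{M}_{n,1}$ together with $(\lambda^{1/2}-1)^2\le(\lambda-1)^2$ for $\lambda\ge0$ lets us replace $\widehat{M}_{n,1}^{1/2}$ by $\widehat{M}_{n,1}$.

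The key structural point is that $\widehat{G}_{n,2}$ depends only on $\zeta_{n,2}$. In \hyperref[F]{\LF} and \hyperref[G]{\LG}, $\zeta_{n,2}$ is independent of $X_1,\ldots,X_m$, so $\widehat{G}_{n,2}$ is independent of $\widehat{M}_{n,1}$. Conditioning on $\widehat{g}_{n,2}$, the quantity $m\,\mathbb{E}\|(\widehat{M}_{n,1}-I)v\|^2$ with $v=G_n-\widehat G_{n,2}$ is a sum over $k$ of variances of i.i.d.\ averages of $\psi_{n,k}(X_j)\,(g-\widehat g_{n,2})^{[K_n^*]}(X_j)$. We bound each variance by the second moment and sum over $k$. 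Lemma~\ref{L:ON} then gives the bound
$$\rho^{-1}\{1+K_n d_n(1+\pi^2/6)\}\,\mathbb{E}\|(g-\widehat g_{n,2})^{[K_n^*]}\|_{p_X}^2 ,$$
and $\|(g-\widehat g_{n,2})^{[K_n^*]}\|_{p_X}\le\|g-\widehat g_{n,2}\|_{p_X}$ because projection is a contraction. Since $1+K_nd_n$ is of order $K_n^*$ (for $K_n\ge2$), condition (\ref{eq:GH}) makes this vanish uniformly in $g$, and $1-e^{-x}\le x$ finishes the argument.

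Everything here is routine once the pieces are in place. The only point needing care is the independence claim: it holds because $\zeta_{n,2}$ was constructed in \hyperref[F]{\LF} to be independent of all covariates, whereas in \hyperref[DS]{\LDS} the analogous statistic was not.
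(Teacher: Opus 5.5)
Your proposal is correct and follows the paper, which omits this proof as analogous to that of Proposition~\ref{prop: 2}: you use the same conditional Hellinger reduction, the replacement of $\widehat{M}_{n,1}^{1/2}$ by $\widehat{M}_{n,1}$ via $(\lambda^{1/2}-1)^2\le(\lambda-1)^2$, and the variance bound through Lemma~\ref{L:ON}, with the independence of $\widehat{G}_{n,2}$ and $\widehat{M}_{n,1}$ playing the role that independence of $\widehat{G}_{n,1}$ and $\widehat{M}_{n,2}$ played there. Your closing contrast is slightly off-target, since any statistic of the second half-sample in \hyperref[DS]{\LDS} would also be independent of $\widehat{M}_{n,1}$; what the passage through \hyperref[F]{\LF} really secures is that $\zeta_{n,2}$ is independent of $Z_{n,1}$, so $Z_{n,1}$ keeps the conditional law $N(\widehat{M}_{n,1}^{1/2}G_n,\sigma^2 I_{K_n^*}/m)$ given $(X_1,\ldots,X_n,\zeta_{n,2})$, and this would fail for $Z_{n,2}^*$ in \hyperref[E]{\LE}, which depends on $Z_{n,1}$ through $\widehat{G}_{n,1}$.
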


The proof is analogous to the one of  Proposition \ref{prop: 2} and therefore omitted.

\subsection{Pilot estimators} \label{SS:pilot}

Now we define our pilot estimators $\widehat{G}_{n,1}$ and $\widehat{G}_{n,2}$ in experiments \hyperref[DI]{\LDI} and \hyperref[F]{\LF}, respectively. We impose that $K_n$ is an integer multiple of some integer parameter $J_n$; put $J_n^* := 1 + d_n(J_n-1)$. Whenever $K_n/J_n \to \infty$ this selection constraint can be satisfied. The linear space $\Phi_n^*$ equals $\Phi_n$ when $K_n$ is replaced by $J_n$. The corresponding orthonormal basis of $\Phi_n^*$ such that the inequality in Lemma \ref{L:ON} is satisfied (again, with $J_n$ instead of $K_n$) is called $\psi_{n,\ell}^*$, $\ell =1,\ldots,J_n^*$. Then,
\begin{align*}
\widehat{g}_{n,1} & \, := \, \sum_{\ell=1}^{J_n^*} \psi_{n,\ell}^* \, \sum_{k=1}^{K_n^*} \langle \psi_{n,\ell}^* , \psi_{n,k}\rangle_{p_X} \cdot \big(\widehat{M}_{n,1}^{1/2}Z_{n,1}\big)_k\,, \\
\widehat{g}_{n,2} & \, := \, \sum_{\ell=1}^{J_n^*} \psi_{n,\ell}^* \, \sum_{k=1}^{K_n^*} \langle \psi_{n,\ell}^* , \psi_{n,k}\rangle_{p_X} \cdot (\zeta_{n,2})_k\,,
\end{align*}
and $\widehat{G}_{n,q} := \{\langle \widehat{g}_{n,q} , \psi_{n,k}\rangle_{p_X}\}_{k=1,\ldots,K_n^*}$. The statistical risk of these estimators is bounded in the following lemma. Its proof is deferred to Section \ref{sec: 8}.
\begin{lem} \label{L:pilot}
It holds that
\begin{align*}
\sup_g \, \mathbb{E} \|\widehat{g}_{n,1} - g\|_{p_X}^2 & \, = \, {\cal O}\big(m^{-1} d_n^3 J_n + d_n J_n^{-2\beta}\big)\,, \\
\sup_g \, \mathbb{E} \|\widehat{g}_{n,2} - g\|_{p_X}^2 & \, = \, {\cal O}\big((n-m)^{-1} d_n J_n  + d_n J_n^{-2\beta}\big)\,,
\end{align*}
where constant factors may only depend on $\sigma$, $\rho$ and $C$ from (\ref{eq:Hoelder}).
\end{lem}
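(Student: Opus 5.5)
Since $J_n$ divides $K_n$, we have $\Phi_n^*\subseteq \Phi_n$. Write $P^*$ for the $L_2(p_X)$-orthogonal projection onto $\Phi_n^*$. Then for any $h\in\Phi_n$ with coefficient vector $H=\{\langle h,\psi_{n,k}\rangle_{p_X}\}_k$,
$$\sum_{\ell}\psi^*_{n,\ell}\sum_k\langle\psi^*_{n,\ell},\psi_{n,k}\rangle_{p_X}H_k = P^*h .$$
So both estimators have the form $\widehat g_{n,q}=P^*\widetilde h_q$, where $\widetilde h_q\in\Phi_n$ is the function whose coefficient vector is $\widehat M_{n,1}^{1/2}Z_{n,1}$ (for $q=1$) or $\zeta_{n,2}$ (for $q=2$). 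The plan is a bias--variance decomposition
$$\|\widehat g_{n,q}-g\|_{p_X}^2\le 2\|P^*g-g\|_{p_X}^2+2\|P^*(\widetilde h_q-g^{[K_n^*]})\|_{p_X}^2,$$
using $P^*g^{[K_n^*]}=P^*g$ because $\Phi_n^*\subseteq\Phi_n$. The bias term is controlled exactly as in (\ref{eq:2.2}) with $K_n$ replaced by $J_n$, which gives $\rho^{-1}d_nC^2J_n^{-2\beta}$ uniformly in $g$.

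For $q=2$, the vector $\zeta_{n,2}-G_n$ is $N(0,\sigma^2I_{K_n^*}/(n-m))$. Hence $\widetilde h_2-g^{[K_n^*]}$ is isotropic Gaussian noise in $\Phi_n$. Its projection onto the $J_n^*$-dimensional subspace $\Phi_n^*$ has expected squared norm $\sigma^2J_n^*/(n-m)=\mathcal O(d_nJ_n/(n-m))$, which is the second claim.

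For $q=1$ there are more terms. Write $\widehat M_{n,1}^{1/2}Z_{n,1}=\widehat M_{n,1}G_n+\sigma m^{-1/2}\widehat M_{n,1}^{1/2}\eta$ with $\eta$ standard Gaussian and independent of the covariates. The noise part contributes
$$\sigma^2 m^{-1}\,\mathbb E\,\mathrm{tr}\big(\Pi^*\widehat M_{n,1}\Pi^*\big),$$
where $\Pi^*$ is the projection matrix corresponding to $P^*$ in coordinates. Since $\mathbb E\widehat M_{n,1}=I$, this equals $\sigma^2J_n^*/m$. The signal part is
$$P^*(\widehat M_{n,1}-I)G_n=P^*\Big(\frac1m\sum_{j=1}^m g^{[K_n^*]}(X_j)\,k_n(X_j,\cdot)-g^{[K_n^*]}\Big),$$
with kernel $k_n(x,y)=\sum_k\psi_{n,k}(x)\psi_{n,k}(y)$. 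Applying $P^*$ turns this into
$$\frac1m\sum_j g^{[K_n^*]}(X_j)\,k_n^*(X_j,\cdot)-P^*g ,$$
where $k_n^*$ is the reproducing kernel of $\Phi_n^*$. This is a centered i.i.d. mean, so its expected squared norm is at most
$$m^{-1}\,\mathbb E\Big[|g^{[K_n^*]}(X)|^2\sum_\ell\psi^{*2}_{n,\ell}(X)\Big]\le m^{-1}\,\|g^{[K_n^*]}\|_\infty^2\,\rho^{-1}\{1+J_nd_n(1+\pi^2/6)\},$$
by Lemma \ref{L:ON}.

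The main obstacle is a uniform sup-norm bound on $g^{[K_n^*]}$. The naive bound is $\|g\|_\infty\le d_nC$, and the projection need not be sup-norm bounded in general. I would first replace $g^{[K_n^*]}$ by the explicit piecewise-constant approximant $\bar g$ from (\ref{eq:2.2}), which satisfies $\|\bar g\|_\infty\le d_nC$. I would then bound the remaining difference $g^{[K_n^*]}-\bar g$ by the same variance computation with $\|\cdot\|_\infty$ replaced by an $L_2$ bound times $\sup\sum_k\psi_{n,k}^2$. If this costs an extra factor of $K_n$, I would instead go through the $L_2$-stability of $P^*$ together with $\|g^{[K_n^*]}\|_{p_X}\le\|g\|_{p_X}\le d_nC$. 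Either route yields the term $\mathcal O(m^{-1}d_n^2\cdot d_nJ_n)=\mathcal O(m^{-1}d_n^3J_n)$, which matches the stated rate and explains the cubic power of $d_n$.
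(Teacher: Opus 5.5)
Your decomposition is the same as the paper's. The paper also splits the risk (via Parseval in the basis $\psi^*_{n,\ell}$) into three pieces: the approximation error on $\Phi_n^*$, bounded by (\ref{eq:2.2}) with $J_n$ in place of $K_n$; the noise term $\sigma^2J_n^*/m$ coming from $\mathbb{E}\widehat{M}_{n,1}=I_{K_n^*}$; and the signal-variance term $\sum_{\ell\le J_n^*}\mathrm{var}\big(m^{-1}\sum_{j}\psi^*_{n,\ell}(X_j)\,g^{[K_n^*]}(X_j)\big)$, which is exactly your centered kernel mean. Your treatment of $\widehat{g}_{n,2}$ is correct, and so is everything for $\widehat{g}_{n,1}$ up to the bound $m^{-1}\mathbb{E}\big[|g^{[K_n^*]}(X)|^2\sum_\ell\psi^{*2}_{n,\ell}(X)\big]$.

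The gap is in the last step. You bound both factors by their suprema, and that choice is what creates the ``main obstacle'' of a uniform sup-norm bound on $g^{[K_n^*]}$. Neither of your workarounds resolves it as written.

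In the first workaround, $\|g^{[K_n^*]}-\bar g\|_\infty^2\le\|g^{[K_n^*]}-\bar g\|_{p_X}^2\sup_x\sum_k\psi_{n,k}^2(x)\lesssim d_nK_n^{-2\beta}\cdot d_nK_n$. This gives a contribution of order $m^{-1}d_n^3J_nK_n^{1-2\beta}$, which is ${\cal O}(m^{-1}d_n^3J_n)$ only when $\beta\ge 1/2$. That suffices for Theorem \ref{T:1}, but the lemma is stated for all $\beta\in(0,1]$.

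The second workaround is unspecified. Suppose ``$L_2$-stability of $P^*$'' means applying $\|P^*f\|_{p_X}\le\|f\|_{p_X}$ before the variance computation. Then the kernel $\sum_\ell\psi_{n,\ell}^{*2}$ is replaced by $\sum_k\psi_{n,k}^2$, and you get $m^{-1}d_n^3K_n$ instead of $m^{-1}d_n^3J_n$. That defeats the purpose of the coarse pilot space: (\ref{eq:EF}) would then require $d_n^4K_n^2/n\to 0$, which is incompatible with (\ref{eq:AB}) for $\beta\le 1$.

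The fix is one line, and it is what the paper does. Take the supremum of the kernel only and keep the expectation on the function:
\[
\mathbb{E}\Big[|g^{[K_n^*]}(X)|^2\sum_{\ell=1}^{J_n^*}\psi^{*2}_{n,\ell}(X)\Big]\le\sup_x\sum_{\ell=1}^{J_n^*}\psi^{*2}_{n,\ell}(x)\cdot\|g^{[K_n^*]}\|_{p_X}^2\le\rho^{-1}\big\{1+d_nJ_n(1+\pi^2/6)\big\}\,C^2d_n^2 .
\]
This uses Lemma \ref{L:ON} with $J_n$ in place of $K_n$, and $\|g^{[K_n^*]}\|_{p_X}\le\|g\|_{p_X}\le\|g\|_\infty\le Cd_n$. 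No sup-norm control of $g^{[K_n^*]}$ is needed, and neither is the detour through $\bar g$.
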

Using the optimal selection $J_n \asymp (n d_n^{-2})^{1/(2\beta+1)}$, we attain the convergence rate
$$ \sup_g \, \mathbb{E} \|\widehat{g}_{n,1} - g\|_{p_X}^2 \, + \, \sup_g \, \mathbb{E} \|\widehat{g}_{n,2} - g\|_{p_X}^2 \, = \, {\cal O}\big( d_n^{1 + 4\beta/(2\beta+1)} n^{-2\beta/(2\beta+1)}\big)\,. $$
Therefore, (\ref{eq:EF}) and (\ref{eq:GH}) are satisfied whenever
\begin{equation} \label{eq:Kn}
\lim_{n\to\infty} \, \big( K_n^{-1} \, n^{1/(2\beta+1)} d_n^{-2/(2\beta+1)} \, + \, K_n \cdot d_n^{2 + 4\beta/(2\beta+1)} \cdot n^{-2\beta/(2\beta+1)}\big) \, = \, 0\,.
\end{equation}
Consequently, together with the developments of Sections \ref{2} and \ref{3}, we have derived the following intermediate result.

\begin{proposition}
The original experiment  \hyperref[A]{\LA} is asymptotically equivalent to \hyperref[H]{\LH}
if the constraints (\ref{eq:AB}) and (\ref{eq:Kn}) are met.
\end{proposition}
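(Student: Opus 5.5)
The proof chains the equivalences and asymptotic equivalences established so far. The Le Cam distance $\Delta$ satisfies the triangle inequality, and exact equivalence contributes distance zero. It therefore suffices to verify that each link in
$$\mathcal A_n \to \mathcal D_n^* \to \mathcal E_n \to \mathcal F_n \to \mathcal G_n \to \mathcal H_n$$
has vanishing Le Cam distance under (\ref{eq:AB}) and (\ref{eq:Kn}), uniformly over $\Theta$.

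\emph{First link.} Apply the first proposition to each half-sample $\mathcal A_{n,1}$ and $\mathcal A_{n,2}$ separately, with $n$ replaced by $m$ and $n-m$. Condition (\ref{eq:AB}) gives $d_n m K_n^{-2\beta}\to0$ and $d_n (n-m)K_n^{-2\beta}\to 0$.

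For independent product experiments, the Hellinger bound in the proof of that proposition adds over the two parts. Equivalently, the total variation of products is at most the sum of the marginal total variations. Hence $\mathcal A_n=\mathcal A_{n,1}\otimes\mathcal A_{n,2}$ is asymptotically equivalent to $\mathcal B_{n,1}\otimes\mathcal B_{n,2}$.

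The sufficiency reduction and standardization of Section~\ref{3} are exact equivalences via $\theta$-free Markov kernels. Applied to each factor, they give exact equivalence with $\mathcal D_n^*$.

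\emph{Second link.} $\mathcal D_n^*$ and $\mathcal E_n$ are exactly equivalent. This uses the measurability of $\widehat G_{n,1}-\widehat M_{n,2}^{1/2}\widehat G_{n,1}$ with respect to the observations, noted in the text.

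\emph{Third link.} Proposition~\ref{prop: 2} reduces the distance between $\mathcal E_n$ and $\mathcal F_n$ to (\ref{eq:EF}). Lemma~\ref{L:pilot} gives
$$K_n^*\sup_g\mathbb E\|g-\widehat g_{n,1}\|^2_{p_X}={\cal O}\big(K_n d_n(m^{-1}d_n^3J_n+d_nJ_n^{-2\beta})\big).$$
Choose $J_n\asymp (nd_n^{-2})^{1/(2\beta+1)}$, rounded so that $J_n$ divides $K_n$. This is admissible because the first term in (\ref{eq:Kn}) forces $K_n/J_n\to\infty$. With this choice the right-hand side is of order $K_n d_n^{2+4\beta/(2\beta+1)}n^{-2\beta/(2\beta+1)}$, which tends to zero by the second term in (\ref{eq:Kn}).

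\emph{Fourth and fifth links.} $\mathcal F_n$ and $\mathcal G_n$ are exactly equivalent, again by measurability of the shift. The distance between $\mathcal G_n$ and $\mathcal H_n$ is handled by the analogue of Proposition~\ref{prop: 2} via (\ref{eq:GH}). Lemma~\ref{L:pilot} bounds the required quantity by the same rate, and even slightly better, since the second estimator carries $d_n$ rather than $d_n^3$.

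The main point needing care is the compatibility of the tuning choices. The rate computation yields
$$d_n^{1+4\beta/(2\beta+1)}n^{-2\beta/(2\beta+1)}$$
only when the two summands of Lemma~\ref{L:pilot} are balanced. The factor $K_n^*\le 1+d_nK_n$ then contributes the extra $d_n K_n$. One must also check that $J_n\ge 1$ and that the divisibility constraint does not perturb the rates beyond constants. Rounding $J_n$ down to a divisor of $K_n$ within a constant factor achieves this, since $K_n/J_n\to\infty$. Uniformity in $g$ is inherited, because every bound used is a supremum over $\Theta$.
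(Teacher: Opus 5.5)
Your chain is the paper's argument, and your rate bookkeeping is correct. The chain runs: $\mathcal{A}_n\to\mathcal{D}_n^*$ via the first proposition and the exact reductions of Section~\ref{3} on each half-sample; then the exact equivalences $\mathcal{D}_n^*\equiv\mathcal{E}_n$ and $\mathcal{F}_n\equiv\mathcal{G}_n$; then the two approximation steps via Proposition~\ref{prop: 2}, its analogue, and Lemma~\ref{L:pilot} with $J_n\asymp(nd_n^{-2})^{1/(2\beta+1)}$.

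One justification is wrong, however. You cannot in general round $J_n$ down to a divisor of a \emph{given} $K_n$ within a constant factor. If $K_n$ is prime, only $J_n\in\{1,K_n\}$ are available. With either choice, $K_n^*$ times the bound of Lemma~\ref{L:pilot} no longer tends to zero: the bias term gives $K_nd_n^2$, and the variance term gives $d_n^4K_n^2/n$, which diverges under (\ref{eq:AB}) since $\beta\le1$.

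The nesting $\Phi_n^*\subseteq\Phi_n$ must instead be arranged on the side of the free smoothing parameter $K_n$. This is what the paper does in Subsection~\ref{SS:pilot}, where it imposes that $K_n$ be a multiple of $J_n$. Since $J_n/K_n\to0$ by the first term of (\ref{eq:Kn}), replacing $K_n$ by $J_n\lceil K_n/J_n\rceil$ changes it only by a factor $1+O(J_n/K_n)$. That preserves both (\ref{eq:AB}) and (\ref{eq:Kn}).

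You also flag $J_n\ge1$ but do not check it. The second term of (\ref{eq:Kn}) with $K_n\ge1$ gives $d_n=o(n^{\beta/(4\beta+1)})$. Hence $nd_n^{-2}\to\infty$, so $J_n\to\infty$.
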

\subsection{Removal of the covariates}

A closer inspection of the experiment \hyperref[H]{\LH} shows that it consists of three independent observations $(X_1,\ldots,X_n)$, $\zeta_{n,1}$ and $\zeta_{n,2}$, where the statistic $(X_1,\ldots,X_n)$ is conditionally ancillary, that is, its conditional distribution, given the other parts, does not depend on the parameter $\tilde{g}$, and hence can be omitted without losing any information on $\tilde{g}$. Furthermore, by the Neyman-Fisher lemma, we recognize $\zeta_n := (m/n) \zeta_{n,1} + (1-m/n) \zeta_{n,2}$ as a sufficient statistic for $\tilde{g}$. \\

Therefore, \hyperref[H]{\LH} is equivalent to the experiment \hyperref[I]{\LI}\label{I}, where only $\zeta_n \sim N(G_n, \sigma^2 I_{K_n^*}/n)$ is observed.

\section{White noise model} \label{FT}
%%%%%%%%%%%%%%%%%%%%%%%%%%%%%%%%%%%%%%%%%%%%%%%%%%%%%%%%%%%%%%%%

Our first goal is to replace the experiment \hyperref[I]{\LI} with the white noise experiment \hyperref[J]{\LJ}\label{J}, in which we observe the stochastic process $B_{1,n}$ on the domain $[0,1]^d$, which is defined by
$$ B_{1,n}(t) \, =  \, \int_0^{t_1} \cdots\int_0^{t_d} g^{[K_n^*]}(x) p_X^{1/2}(x)\, dx \, + \, \frac{\sigma}{\sqrt{n}}\, W(t)\,, \qquad t=(t_1,\ldots,t_d) \in [0,1]^d\,, $$
where $W$ denotes a $d$-variate Wiener process/Brownian sheet, that is,~$W$ is a stochastic process on the Borel subsets of $[0,1]^d$ such that $W(B_j)$, $j=1,\ldots,\ell$, are independent $N(0,\lambda_d(B_j))$-distributed random variables for all pairwise disjoint Borel subsets $B_1,\ldots,B_\ell$ of $[0,1]^d$ and any integer $\ell\geq 0$ where $\lambda_d$ stands for the $d$-dimensional Lebesgue-Borel measure; and we write $W(t) := W([0,t_1]\times \cdots\times[0,t_d])$.

\begin{proposition}
    The experiments \hyperref[I]{\LI} and \hyperref[J]{\LJ} are equivalent.
    \end{proposition}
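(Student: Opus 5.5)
We prove exact equivalence by writing down Markov kernels in both directions; there is no approximation step. The guiding observation is that the drift of $B_{1,n}$ is determined by $G_n$. To see this, put $h_{n,k} := \psi_{n,k}\, p_X^{1/2}$ for $k=1,\ldots,K_n^*$. These functions are orthonormal in $L_2(\lambda_d)$ on $[0,1]^d$, since $\int \psi_{n,k}\psi_{n,k'} p_X\,dx = \delta_{k,k'}$. Moreover, $g^{[K_n^*]} p_X^{1/2} = \sum_k (G_n)_k h_{n,k}$. So the drift density of $B_{1,n}$ lies in the finite-dimensional subspace $H_n := \mathrm{span}\{h_{n,k}\}$ with coefficient vector $G_n$.

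\emph{From \hyperref[J]{\LJ} to \hyperref[I]{\LI}.} Form the stochastic integrals $\int h_{n,k}(t)\,dB_{1,n}(t)$. This is legitimate because each $h_{n,k}$ is bounded (by Lemma \ref{L:ON} and $p_X\le 1/\rho$) and hence lies in $L_2([0,1]^d)$. By orthonormality and the isometry property of the Brownian sheet, the vector of these integrals equals
$$ G_n + \frac{\sigma}{\sqrt n}\Big(\int h_{n,k}\,dW\Big)_k \sim N(G_n,\sigma^2 I_{K_n^*}/n). $$
This has exactly the law of $\zeta_n$, so it gives a deterministic kernel.

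\emph{From \hyperref[I]{\LI} to \hyperref[J]{\LJ}.} Take $\zeta_n$, draw an independent Brownian sheet $W'$, and construct
$$ \tilde B(t) := \sum_{k} (\zeta_n)_k \int_{[0,t]} h_{n,k}\,dx + \frac{\sigma}{\sqrt n}\Big( W'(t) - \sum_k \Big(\int h_{n,k}\,dW'\Big)\int_{[0,t]} h_{n,k}\,dx\Big). $$
Here $[0,t]=[0,t_1]\times\cdots\times[0,t_d]$. The bracketed term is the projection of the white noise $dW'$ onto $H_n^\perp$, and the $\zeta_n$-term supplies the noisy $H_n$-component. We must show that $\tilde B$ has the law of $B_{1,n}$. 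Both are Gaussian processes, so it suffices to compare means and covariances.

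The mean of $\tilde B(t)$ is $\sum_k (G_n)_k\int_{[0,t]}h_{n,k} = \int_{[0,t]} g^{[K_n^*]}p_X^{1/2}$, which matches. For the covariance, write the noise of $\tilde B$ as $\frac{\sigma}{\sqrt n}$ times the sum of two parts: the $H_n$-part $\sum_k \xi_k h_{n,k}$ with $\xi_k$ i.i.d. standard normal (from $\zeta_n$), and the $H_n^\perp$-part $(I-P_{H_n})\,dW'$, which is independent of the first. For indicator functions $f=\mathbf 1_{[0,t]}$ and $f'=\mathbf 1_{[0,t']}$ the covariance is
$$ \frac{\sigma^2}{n}\big(\langle P_{H_n} f, P_{H_n}f'\rangle + \langle (I-P_{H_n})f,(I-P_{H_n})f'\rangle\big) = \frac{\sigma^2}{n}\lambda_d([0,t]\cap[0,t']). $$
This is the Brownian sheet covariance. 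Hence $\tilde B \overset{d}{=} B_{1,n}$ for every $\tilde g$. Neither kernel depends on $g$.

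The only delicate point is the rigorous handling of the Brownian sheet integrals and the projected noise. It is covered by the isometry $\mathbb E(\int f\,dW \int f'\,dW)=\langle f,f'\rangle_{L_2}$ for $f,f'\in L_2([0,1]^d)$, together with boundedness of the $h_{n,k}$. Alternatively, one can argue through sufficiency: the likelihood ratio of \hyperref[J]{\LJ} (Girsanov for the sheet) depends on the observation only through $(\int h_{n,k}\,dB_{1,n})_k$. This is again the Neyman-Fisher argument used in Section \ref{3}, and it also gives the equivalence directly.
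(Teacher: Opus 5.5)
Your proof is correct and is essentially the paper's argument. The paper expands $B_{1,n}$ in the scores $\int \psi_{n,k}\,p_X^{1/2}\,dB_{1,n}$ with respect to an orthonormal basis of $L_2(p_X)$ that extends $\psi_{n,1},\ldots,\psi_{n,K_n^*}$. It then uses that these scores are independent and that those with $k>K_n^*$ are pure noise, so the first $K_n^*$ scores are sufficient and have the law of $\zeta_n$. Your explicit kernel from \LI{} to \LJ{}, which regenerates the $H_n^\perp$-component as $W'(t)-\sum_k\big(\int h_{n,k}\,dW'\big)\int_{[0,t]}h_{n,k}\,dx$, is a closed-form version of the same ancillary completion. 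It has the minor advantage of avoiding the basis extension and the infinite-series reconstruction.
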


\begin{proof}The action space of \hyperref[J]{\LJ} is the Banach space $C([0,1]^d)$ of all continuous functions on the domain $[0,1]^d$, equipped with the supremum norm and the corresponding Borel $\sigma$-field, hence the action space is Polish.
%For simple functions $f := \sum_{k=1}^m f_k \cdot {\bf 1}_{B_k}$ on the domain $[0,1]^d$ the integral $\int f(t) dW(t)$ is defined by $\sum_{k=1}^m f_k \cdot W(B_k)$. We write $L_2([0,1]^d)$ for $L_2(p_X)$ when $p_X$ is replaced by the uniform density on $[0,1]^d$. Any $f\in L_2([0,1]^d)$ is approximable by a sequence of simple functions with respect to the underlying $L_2([0,1]^d)$-norm. Then $\int f(t)  dW(t)$ is defined by the mean square limit of the integrals of the approximating simple functions. Note that $\int f(t) dW(t)$ is independent of the specific choice of the simple functions.
Note that $B_{1,n}$ can be reconstructed from its scores $\int \psi_{n,k}(x)\, p_X(x)^{1/2}dB_{1,n}(x)$, integer $k\geq 1$, when the definition of $\psi_{n,k}$ is extended for $k>K_n^*$ such that $\psi_{n,k}$, integer $k\geq 1$, are an orthonormal basis of $L_2(p_X)$, since
\begin{align*} B_{1,n}(t_1,\ldots,t_d) & \, = \, \idotsint {\bf 1}_{[0,t_1]\times \cdots\times [0,t_d]}(x) \, dB_{1,n}(x) \\ & \, = \, \sum_{k=1}^\infty \big\langle \psi_{n,k},{\bf 1}_{[0,t_1]\times \cdots\times [0,t_d]} \cdot p_X^{-1/2}\big\rangle_{p_X} \cdot \int \psi_{n,k}(x) p_X^{1/2}(x) \, dB_{1,n}(x)\,. \end{align*}
These integrals correspond to the Skorokhod integral, which coincides here with the Wiener–It\^{o} integral as the integrands are deterministic.
The above scores are independent and the distribution of the scores for $k>K_n^*$ does not depend on $\tilde{g}$, so that $\int \psi_{n,k}(x)\, p_X(x)^{1/2}dB_{1,n}(x)$, $k=1,\ldots,K_n^*$ form a sufficient statistic for $\tilde{g}$ in the experiment \hyperref[J]{\LJ}. Their joint distribution is equal to that of the random vector $\zeta_n$ in the experiment \hyperref[I]{\LI} from the previous section, which implies equivalence of experiments \hyperref[I]{\LI} and \hyperref[J]{\LJ}.
\end{proof}

Now we approximate \hyperref[J]{\LJ} by the following experiment \hyperref[K]{\LK}\label{K}, in which one observes the stochastic process $B_{2,n}$ defined by
$$ B_{2,n}(t) \, = \, \int_0^{t_1} \cdots\int_0^{t_d} g(x) p_X^{1/2}(x)\, dx \, + \, \frac{\sigma}{\sqrt{n}}\, W(t)\,, \qquad t=(t_1,\ldots,t_d) \in [0,1]^d\,. $$
\begin{proposition}
    The experiments \hyperref[J]{\LJ} and \hyperref[K]{\LK} are asymptotically equivalent  if condition (\ref{eq:AB}) is granted.
    \end{proposition}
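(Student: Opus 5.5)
Both experiments observe a Gaussian shift process on $[0,1]^d$ with the same noise $\frac{\sigma}{\sqrt n}W$; only the drift differs. In $\mathbf{{\cal J}_n}$ the drift density is $g^{[K_n^*]}p_X^{1/2}$, in $\mathbf{{\cal K}_n}$ it is $g\,p_X^{1/2}$. We take the identity kernel in both directions, so the Le Cam distance is bounded by $\sup_g \mathrm{TV}(P_{J,g},P_{K,g})$.

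We bound this total variation via the Hellinger distance. By Girsanov's theorem for the Brownian sheet (equivalently, the Cameron--Martin theorem for the Gaussian measure of $W$, whose Cameron--Martin space consists of the functions $t\mapsto\int_{[0,t]}h\,dx$ with $h\in L_2([0,1]^d)$), two such shifted laws are mutually absolutely continuous. The squared Hellinger distance satisfies
$$\mathbb{H}^2 \,\le\, 2\Big\{1-\exp\Big(-\frac{n}{8\sigma^2}\,\big\|(g-g^{[K_n^*]})p_X^{1/2}\big\|_{L_2([0,1]^d)}^2\Big)\Big\}.$$
The drift difference is deterministic, so the Hellinger affinity is exactly this exponential, and no Jensen step is needed as in \eqref{eq:Hell}.

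Next, note the isometry
$$\int_{[0,1]^d}\big|g-g^{[K_n^*]}\big|^2 p_X\,dx \,=\, \|g-g^{[K_n^*]}\|_{p_X}^2 .$$
By \eqref{eq:2.2} this is at most $\rho^{-1}d_n C^2K_n^{-2\beta}$, uniformly over the parameter space. The Hellinger bound is therefore at most $2\{1-\exp(-C^2 d_n nK_n^{-2\beta}/(8\sigma^2\rho))\}$, which tends to zero under \eqref{eq:AB}. Finally, $\mathrm{TV}\le \sqrt{2}\,\mathbb{H}$ (or $\mathrm{TV}\le\mathbb{H}$ with the normalization used in Section \ref{2}) completes the argument.

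The only step needing care is the Hellinger formula for the infinite-dimensional Gaussian shift. We can reduce it to the finite-dimensional computation already done. Expand in the orthonormal system $\{\psi_{n,k}p_X^{1/2}\}_{k\ge1}$ of $L_2([0,1]^d)$, as in the proof of the preceding proposition, after extending $\psi_{n,k}$ to an orthonormal basis of $L_2(p_X)$. The scores $\int\psi_{n,k}p_X^{1/2}\,dB_{i,n}$ are then independent $N(\langle g^{(i)},\psi_{n,k}\rangle_{p_X},\sigma^2/n)$, where $g^{(1)}=g^{[K_n^*]}$ and $g^{(2)}=g$. Since the process is a measurable function of its scores, the problem becomes one for product Gaussian measures on $\mathbb{R}^\infty$. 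There the Hellinger affinity is the product of the coordinatewise affinities, which gives the formula above with $\sum_k\langle g-g^{[K_n^*]},\psi_{n,k}\rangle_{p_X}^2=\|g-g^{[K_n^*]}\|_{p_X}^2$ by Parseval.
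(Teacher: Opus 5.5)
Your proof is correct and takes essentially the paper's route. Both arguments use identity kernels on the common path space, compute a divergence between the two Gaussian shift laws via Cameron--Martin in terms of $\|g-g^{[K_n^*]}\|_{p_X}^2$, and conclude with \eqref{eq:2.2}; the only difference is cosmetic, since the paper uses the Kullback--Leibler divergence $\frac{n}{2\sigma^2}\|g-g^{[K_n^*]}\|_{p_X}^2$ with the Bretagnolle--Huber inequality, whereas you compute the Hellinger affinity exactly (as in \eqref{eq:Hell}) and use $\mathrm{TV}\le\mathbb{H}$.
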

\begin{proof}By the Cameron-Martin theorem, the Kullback-Leibler divergence $\mathbb{K}(\cdot,\cdot)$ between the probability measure of $B_{1,n}$ in the experiment \hyperref[J]{\LJ} and that of $B_{2,n}$ is equal to
$$ \mathbb{K}\big(\mathbb{P}_{B_{2,n}},\mathbb{P}_{B_{1,n}}\big) \, = \, \frac{n}{2\sigma^2} \cdot \big\|g - g^{[K_n^*]}\big\|_{p_X}^2\,.$$
Note that \hyperref[J]{\LJ} and \hyperref[K]{\LK} share the same action space. Thus, using (\ref{eq:2.2}) and the Bretagnolle-Huber inequality,  \hyperref[J]{\LJ} and \hyperref[K]{\LK} are asymptotically equivalent whenever (\ref{eq:AB}) is granted. \end{proof}

\begin{rem}\normalfont
By the experiment \hyperref[K]{\LK}, we arrive at a white noise model similar to \cite{R2008}. However, in the present note, asymptotic equivalence to the original regression experiment \hyperref[A]{\LA} has been proved for additive models under much less restrictive conditions with respect to the smoothness level $\beta$ (whereas $\beta>d/2$ is needed in \cite{R2008}), classes of non-necessarily periodic regression functions, potentially increasing $d=d_n$ and more general design densities $p_X$ (distinct from the uniform density).
\end{rem}

In view of the additive structure of $g$ in (\ref{eq:1.2}), we are not yet satisfied with \hyperref[K]{\LK} as our final target model. To proceed, we first need some notation. We introduce the Hilbert space $L_2([0,1],\mathbb{R}^d)$ which contains all measurable and square integrable functions $f$ from $[0,1]$ to $\mathbb{R}^d$ and is endowed with the inner product
$$ \langle f,\tilde{f} \rangle_{2,d} \, := \, \int_0^1 f(x)^\top \tilde{f}(x) \, dx\,, \qquad f,\tilde{f}\in L_2([0,1],\mathbb{R}^d)\,. $$
Subsequently, a crucial role is played by the linear operator $\Lambda$ from $L_2(p_X)$ to $L_2([0,1],\mathbb{R}^d)$ which maps any $f \in L_2(p_X)$ to the function
$$ t \, \mapsto \, \Big\{\idotsint f(x) p_X(x) \, dx_1\cdots dx_{\ell-1} dx_{\ell+1} \cdots dx_d\Big|_{x_\ell=t}\Big\}^\top_{\ell=1,\ldots,d}\,, \qquad t\in [0,1]\,. $$
Note that, by Jensen's inequality and Fubini's theorem,
$$ \|\Lambda f\|_{2,d}^2 \, \leq \, \sum_{\ell=1}^d \int f^2(x) p_X^2(x) dx \, \leq \, \rho^{-1} \cdot d \cdot \|f\|_{p_X}^2\,, \qquad \forall f \in L_2(p_X)\,, $$
so that $\Lambda$ is a bounded linear operator. Also, by Fubini's theorem and
\begin{align*}
\langle \Lambda f,\tilde{f} \rangle_{2,d} = \langle f, \Lambda^{\top} \tilde{f} \rangle_{p_X}, \quad f \in L_2(p_X), \tilde{f} \in L_2([0,1],\mathbb{R}^d),
\end{align*}
we deduce the following representation of its adjoint: the operator $\Lambda^\top$ maps any $f = (f_1,\ldots,f_d)^\top \in L_2([0,1],\mathbb{R}^d)$ to the function
$$ x=(x_1,\ldots,x_d) \, \mapsto \, \sum_{\ell=1}^d f_\ell(x_\ell)\,, \qquad x\in [0,1]^d\,, $$
which is located in $L_2(p_X)$.

\paragraph*{Sufficiency in \hyperref[K]{\LK}} Now we aim to extract a sufficient statistic in the experiment \hyperref[K]{\LK}. The Cameron-Martin theorem yields that the probability measure $\mathbb{P}_{B_{2,n}}$ of $B_{2,n}$ in the experiment \hyperref[K]{\LK} has the Radon-Nikodym density
\begin{equation} \label{eq:RNCM} \frac{d\mathbb{P}_{B_{2,n}}}{d\mathbb{P}_{W_{\sigma,n}}}\big(W_{\sigma,n}) \, = \,  \exp\Big( \frac{n}{\sigma^2} \, \int g(x) p_X^{1/2}(x) \, dW_{\sigma,n}(x)\Big) \cdot \exp\Big(-\frac{n}{2\sigma^2} \|g\|_{p_X}^2\Big)\,, \end{equation}
with respect to the probability measure $\mathbb{P}_{W_{\sigma,n}}$ of $W_{\sigma,n} := \sigma W/\sqrt{n}$. Recall $\tilde{g}= (g_1,\ldots,g_d)^\top$, and note that $\tilde{g} \in L_2([0,1],\mathbb{R}^d)$ and $\Lambda^\top \tilde{g} = g$. As $g$ is located in the range of $\Lambda^\top$, the general version of the Neyman-Fisher factorization lemma (see \cite{S85}, \cite{HS49}) and (\ref{eq:RNCM}) yield that the random linear functional $\Xi_n$ defined by
$$ \Xi_n \, : \, f \, \mapsto \, \int \big[\Lambda^\top f\big](x) \cdot p_X^{1/2}(x) \, dB_{2,n}(x)\,, \qquad f\in L_2([0,1],\mathbb{R}^d)\,, $$
forms a sufficient statistic for $\tilde{g}$ in the experiment \hyperref[K]{\LK} (where the observation $B_{2,n}$ from \hyperref[K]{\LK} is inserted in the statistic). \\

Therefore, \hyperref[K]{\LK} is equivalent to the experiment \hyperref[L]{\LL}\label{L}, in which only $\Xi_n$ is observed.

\paragraph*{Details on the experiment \hyperref[L]{\LL}} It follows that
\begin{align*}
\Xi_n(f) & \, = \,  \big\langle \Lambda^\top f , g\big\rangle_{p_X} \, + \, \frac{\sigma}{\sqrt{n}} \, \int \big[\Lambda^\top f\big](x) \cdot p_X^{1/2}(x) \, dW(x)\,.
\end{align*}
As concerns the scalar product, we have the identity
$$ \big\langle \Lambda^\top f , g\big\rangle_{p_X} \, = \, \big\langle \Lambda^\top f , \Lambda^\top \tilde{g}\big\rangle_{p_X} \, = \, \langle f , \Gamma \tilde{g}\rangle_{2,d}\,, $$
where $\Gamma := \Lambda \Lambda^\top$. This linear operator $\Gamma$ is bounded as the concatenation of two bounded linear operators, but not compact in general. Thus, for any deterministic $f\in L_2([0,1],\mathbb{R}^d)$, the random variable $\Xi_n(f)$ is normally distributed with the expectation $\langle f,\Gamma \tilde{g}\rangle_{2,d}$ and the variance
$$ \frac{\sigma^2}n \cdot \big\|\Lambda^\top f\big\|_{p_X}^2 \, = \, \frac{\sigma^2}n \cdot \langle f , \Gamma f\rangle_{2,d} \, = \, \frac{\sigma^2}n \cdot \big\|\Gamma^{1/2} f\big\|_{2,d}^2\,, $$
where $\Gamma^{1/2}$ denotes the unique square root of the self-adjoint and positive semi-definite bounded linear operator $\Gamma$ from the Hilbert space $L_2([0,1],\mathbb{R}^d)$ to itself (see \cite{SR1980}, Theorem VI.9, p.~196). Moreover we derive an alternative representation of $\Xi_n$ in the following: the random linear functional $\tilde{\Xi}_n$ is defined by
$$ \tilde{\Xi}_n \, : \, f \, \mapsto \, \int \big[\Gamma^{1/2} f\big](t) \, dR_n(t)\,, \qquad f \in L_2([0,1],\mathbb{R}^d)\,, $$
where $R_n$ denotes the Gaussian process
\begin{equation} \label{eq:Zn} R_n(t) \, = \, \int_0^t \big[\Gamma^{1/2}\tilde{g}\big](s) \, ds \, + \, \frac\sigma{\sqrt{n}} \cdot \tilde{W}(t)\,, \qquad t\in [0,1]\,, \end{equation}
with $\tilde{W}(t) := \big(W_1(t),\ldots,W_d(t)\big)^\top$, $t\in [0,1]$, consisting of independent standard Wiener processes $W_j$, $j=1,\ldots,d$, on the domain $[0,1]$, so that
$$ \tilde{\Xi}_n(f) \, = \, \langle f, \Gamma \tilde{g}\rangle_{2,d} \, + \, \frac{\sigma}{\sqrt{n}} \cdot \int \big[\Gamma^{1/2}f\big](t) \, d\tilde{W}(t)\,. $$
Hence, for any deterministic $f\in L_2([0,1],\mathbb{R}^d)$, the random variables $\tilde{\Xi}_n(f)$ and $\Xi_n(f)$ are identically distributed. Since both random functionals are linear, it follows that $\Xi_n$ and $\tilde{\Xi}_n$ share the same probability measure so that the random observation from experiment \hyperref[L]{\LL} may also be represented by $\tilde{\Xi}_n$.

\smallskip
Our target model is the experiment in which just $R_n$ in (\ref{eq:Zn}) is observed. To this aim some results from spectral theory are required. Thanks to the multiplication operator version of the spectral theorem for self-adjoint bounded linear operators (see e.g.~\cite{SR1980}, corollary of Theorem VII.3, p.~227), $\Gamma$ may be represented by $\Gamma = V^\top S_h V$ where $V$ denotes some unitary operator from $L_2([0,1],\mathbb{R}^d)$ onto the (separable) Hilbert space $L_2(\Omega,\mathfrak{A},\nu)$ of all measurable and square integrable real-valued functions on some $\sigma$-finite measure space $(\Omega,\mathfrak{A},\nu)$; and $S_h$ denotes the multiplication operator with some bounded and measurable function $h$ from $\Omega$ to $\mathbb{R}$. As $\Gamma$ is positive semi-definite, the function $h$ is nonnegative; and $\Gamma^{1/2} = V^\top S_{h^{1/2}} V$. We introduce the formal integral $W^\nu(f) = \int f dW^\nu$, $f\in L_2(\Omega,\mathfrak{A},\nu)$, as
$\sum_{j=1}^\infty \big\langle V^\top f , \xi_j\big\rangle_{2,d} \cdot \eta_j$,
for some fixed orthonormal basis $\{\xi_j\}_{j\geq 1}$ of $L_2([0,1],\mathbb{R}^d)$ and some sequence $(\eta_j)_j$ of i.i.d.~$N(0,1)$-distributed random variables; note that
$$\big\langle V^\top f , \xi_j\big\rangle_{2,d} = \big\langle f , V \xi_j\big\rangle_{\nu}\,, $$
where $\langle\cdot,\cdot,\rangle_\nu$ denotes the inner product of $L_2(\Omega,\mathfrak{A},\nu)$. We define the random linear functional $\tilde{\Xi}_n^\nu$ by
$$ \tilde{\Xi}_n^\nu \, : \, f \, \mapsto \, \big\langle f , h \cdot V\tilde{g}\big\rangle_\nu \, + \, \frac\sigma{\sqrt{n}} \, \int h^{1/2} \cdot f \, dW^\nu\,, \qquad f \in L_2(\Omega,\mathfrak{A},\nu)\,. $$

Let \hyperref[M]{\LM}\label{M} be the experiment in which the random linear functional $\tilde{\Xi}_n^\nu$ is observed.

\begin{proposition}
    The experiments \hyperref[L]{\LL} and \hyperref[M]{\LM} are equivalent.
    \end{proposition}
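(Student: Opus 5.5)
The plan is to show that the random linear functional $\tilde{\Xi}_n^\nu$ is obtained from $\tilde{\Xi}_n$ by a deterministic, parameter-free bijective relabelling of test functions, induced by the unitary $V$. Concretely, I will define the map $T: \tilde{\Xi}_n \mapsto \tilde{\Xi}_n \circ V^\top$, that is, $(T\tilde{\Xi}_n)(f) := \tilde{\Xi}_n(V^\top f)$ for $f\in L_2(\Omega,\mathfrak{A},\nu)$. Its inverse is $\Xi \mapsto \Xi\circ V$, since $V V^\top$ and $V^\top V$ are identities. Because neither map involves $\tilde{g}$, both are valid (deterministic) Markov kernels. Recall that observing $\Xi_n$ in \hyperref[L]{\LL} is equivalent to observing $\tilde{\Xi}_n$. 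Equivalence therefore follows once I show that $T\tilde{\Xi}_n$ and $\tilde{\Xi}_n^\nu$ have the same distribution for every $\tilde{g}\in\Theta$.

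\emph{Means.} For $f\in L_2(\Omega,\mathfrak{A},\nu)$ I use $\Gamma = V^\top S_h V$ to compute
$$\mathbb{E}\,\tilde{\Xi}_n(V^\top f)=\langle V^\top f,\Gamma\tilde{g}\rangle_{2,d}=\langle f, V V^\top S_h V\tilde{g}\rangle_\nu=\langle f,h\cdot V\tilde{g}\rangle_\nu ,$$
which is exactly the mean of $\tilde{\Xi}_n^\nu(f)$.

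\emph{Covariances.} Both functionals are linear and Gaussian, so it suffices to match covariances of the centred parts. For $T\tilde{\Xi}_n$, the Itô isometry gives
$$\mathrm{cov}\bigl(\tilde{\Xi}_n(V^\top f_1),\tilde{\Xi}_n(V^\top f_2)\bigr)=\frac{\sigma^2}{n}\,\langle \Gamma^{1/2}V^\top f_1,\Gamma^{1/2}V^\top f_2\rangle_{2,d}=\frac{\sigma^2}{n}\,\langle h^{1/2} f_1,h^{1/2} f_2\rangle_\nu ,$$
where the last step uses $\Gamma^{1/2}=V^\top S_{h^{1/2}}V$ and unitarity. For $\tilde{\Xi}_n^\nu$, the definition of $W^\nu$ together with Parseval in $L_2([0,1],\mathbb{R}^d)$ gives
$$\mathrm{cov}\Bigl(\int g_1\,dW^\nu,\int g_2\,dW^\nu\Bigr)=\sum_j\langle V^\top g_1,\xi_j\rangle_{2,d}\langle V^\top g_2,\xi_j\rangle_{2,d}=\langle V^\top g_1,V^\top g_2\rangle_{2,d}=\langle g_1,g_2\rangle_\nu .$$
Applying this with $g_i=h^{1/2}f_i$ shows the two covariances agree. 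Here $h^{1/2}f_i\in L_2(\nu)$ because $h$ is bounded, and the series converges in $L_2$ of the probability space.

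The main obstacle is the measure-theoretic meaning of ``observing a random linear functional'' and equality in law. I will regard each functional as a random element of $\mathbb{R}^{\mathbb{N}}$ via its values on a countable dense set: $\{V\xi_j\}$ in $L_2(\nu)$, and correspondingly $\{\xi_j\}$ in $L_2([0,1],\mathbb{R}^d)$. The map $T$ acts coordinatewise, sending the value at $V\xi_j$ to the value at $\xi_j$, so it is measurable and bijective on sequence space. Linearity and $L_2$-continuity in probability (each is an isonormal-type Gaussian process) then determine the whole functional almost surely from these coordinates. Equality of all finite-dimensional Gaussian laws, established above, yields equality of the laws on $\mathbb{R}^{\mathbb{N}}$, and hence equivalence of \hyperref[L]{\LL} and \hyperref[M]{\LM}.
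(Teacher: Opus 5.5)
Your proposal is correct and follows essentially the same route as the paper. Both use the parameter-free relabelling $f\mapsto\tilde{\Xi}_n(V^\top f)$ with inverse $f\mapsto\tilde{\Xi}_n^\nu(Vf)$, match the Gaussian means and second moments via $\Gamma=V^\top S_h V$, $\Gamma^{1/2}=V^\top S_{h^{1/2}}V$ and unitarity of $V$, and then use linearity to pass to equality in law of the whole functionals. Computing full covariances instead of variances, and encoding the functionals as sequences indexed by the orthonormal basis $\{V\xi_j\}$, only makes explicit what the paper compresses into its appeal to linearity. Note that $\{V\xi_j\}$ is total rather than dense, but totality is all your argument needs.
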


\begin{proof}For any deterministic $f \in L_2(\Omega,\mathfrak{A},\nu)$, the random variable $\tilde{\Xi}_n^\nu(f)$ is normally distributed with the expectation
$$ \big\langle f , h \cdot V\tilde{g}\big\rangle_\nu \, = \, \big\langle V^\top f , V^\top S_h V \tilde{g}\big\rangle_{2,d} \, = \, \big\langle V^\top f , \Gamma \tilde{g}\big\rangle_{2,d}\,, $$
and the variance
$$ \frac{\sigma^2}n  \sum_{j=1}^\infty \big\langle h^{1/2} f , V \xi_j\big\rangle_\nu^2 \, = \, \frac{\sigma^2}n \big\|V^\top\big(h^{1/2} f\big)\big\|_{2,d}^2 \, = \, \frac{\sigma^2}n  \big\| \Gamma^{1/2} V^\top f\big\|_{2,d}^2\,. $$
In the experiment \hyperref[L]{\LL}, we observe $\tilde{\Xi}_n$. The operator $\Gamma$ is known and so are $V$ and $h$; thus the random linear functional $f \mapsto \tilde{\Xi}_n\big(V^\top f\big)$, $f\in L_2(\Omega,\mathfrak{A},\nu)$, is empirically accessible. As shown above, this random functional and $\tilde{\Xi}_n^\nu$ are identically distributed. Again, since both functionals are linear, we may conclude from the coincidence of all finite-dimensional marginal distributions to identical distributions of the random functionals. Vice versa, when $\tilde{\Xi}_n^\nu$ is observed, the map $f \mapsto \tilde{\Xi}_n^\nu(Vf)$, $f \in L_2([0,1],\mathbb{R}^d)$, transforms this observation into that of the experiment \hyperref[L]{\LL}. \end{proof}

The next goal is to remove $h^{1/2}$ from the noise term. \\

Let \hyperref[N]{\LN}\label{N} be the experiment in which the random linear functional
\begin{align} \nonumber \label{eq:RLF2}
f \, \mapsto \, & \big\langle f , h^{1/2}\cdot V\tilde{g}\big\rangle_\nu \, + \, \frac\sigma{\sqrt{n}} \int  f \, dW^\nu \\
&  \, = \, \big\langle V^\top f , \Gamma^{1/2} \tilde{g}\big\rangle_{2,d} \, + \, \frac\sigma{\sqrt{n}} \int  f \, dW^\nu \,, \qquad f \in L_2(\Omega,\mathfrak{A},\nu)\, ,
\end{align}
is observed.

\begin{proposition}
The experiments \hyperref[M]{\LM} and \hyperref[N]{\LN} are equivalent.
\end{proposition}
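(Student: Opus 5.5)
The plan is to give explicit, parameter-free transformations in both directions, as in the proof of the preceding proposition. Each transformation maps the observed random linear functional of one experiment to a random linear functional whose law is exactly that of the observation in the other experiment. As before, both observations are centered-plus-shifted Gaussian linear functionals on $L_2(\Omega,\mathfrak{A},\nu)$. It therefore suffices to match the mean $f\mapsto$ (shift) and the covariance of the noise part for all deterministic $f$; linearity then gives equality of all finite-dimensional marginals and hence of the laws.

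\emph{From \hyperref[N]{\LN} to \hyperref[M]{\LM}.} Denote the observation in \hyperref[N]{\LN} by $Y_N$. Since $h$ is bounded and nonnegative, $h^{1/2}f\in L_2(\Omega,\mathfrak{A},\nu)$ for every $f$, so we may set $Y_M(f):=Y_N(h^{1/2}f)$. Then
$$Y_M(f)=\langle h^{1/2}f,h^{1/2}V\tilde g\rangle_\nu+\tfrac{\sigma}{\sqrt n}\int h^{1/2}f\,dW^\nu=\langle f,h\cdot V\tilde g\rangle_\nu+\tfrac{\sigma}{\sqrt n}\int h^{1/2}f\,dW^\nu ,$$
which is precisely $\tilde\Xi_n^\nu(f)$. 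This direction is exact and does not depend on $\tilde g$.

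\emph{From \hyperref[M]{\LM} to \hyperref[N]{\LN}.} This is the main obstacle, because $h^{-1/2}$ may be unbounded and $h$ may vanish on a set of positive $\nu$-measure. For $f\in L_2(\Omega,\mathfrak{A},\nu)$ put $f_k:=f\cdot{\bf 1}_{\{h>1/k\}}$. Then $h^{-1/2}f_k\in L_2$ and
$$\tilde\Xi_n^\nu(h^{-1/2}f_k)=\langle f_k,h^{1/2}V\tilde g\rangle_\nu+\tfrac{\sigma}{\sqrt n}\int f_k\,dW^\nu .$$
As $k\to\infty$ we have $f_k\to f{\bf 1}_{\{h>0\}}$ in $L_2(\nu)$. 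Hence the deterministic parts converge, and the Gaussian noise parts form a Cauchy sequence in $L_2(\mathbb{P})$ (the variance of a difference is $\|f_k-f_l\|_\nu^2$). So $\lim_k\tilde\Xi_n^\nu(h^{-1/2}f_k)$ exists in $L_2(\mathbb P)$, and almost surely along a subsequence. The subsequence can be chosen depending only on $f$, not on $\tilde g$, since the rate of the Cauchy property is parameter-free. On $\{h=0\}$ the target shift $h^{1/2}V\tilde g$ vanishes, so only noise has to be supplied there. We generate an independent copy $W'$ of $W^\nu$ (a randomization independent of the data and of $\tilde g$) and define
$$Y_N(f):=\lim_{k\to\infty}\tilde\Xi_n^\nu(h^{-1/2}f_k)+\tfrac{\sigma}{\sqrt n}\int f{\bf 1}_{\{h=0\}}\,dW' .$$

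It remains to check that $Y_N$ is a linear Gaussian functional with the correct law. It is linear in $f$, since limits and the stochastic integral are linear, almost surely for each finite family of $f$'s. Its mean is $\langle f{\bf 1}_{\{h>0\}},h^{1/2}V\tilde g\rangle_\nu=\langle f,h^{1/2}V\tilde g\rangle_\nu$. Its noise is $\tfrac{\sigma}{\sqrt n}\big(\int f{\bf 1}_{\{h>0\}}dW^\nu+\int f{\bf 1}_{\{h=0\}}dW'\big)$. By independence and disjointness of supports, the covariance between $f$ and $\tilde f$ equals $\tfrac{\sigma^2}{n}\langle f,\tilde f\rangle_\nu$, matching \eqref{eq:RLF2}. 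As in the proof of the preceding proposition, coincidence of all finite-dimensional marginals of linear Gaussian functionals yields identical laws. This gives a Markov kernel from \hyperref[M]{\LM} to \hyperref[N]{\LN} that reproduces \hyperref[N]{\LN} exactly, and so the two experiments are equivalent.
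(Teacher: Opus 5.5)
Your proof is correct and follows the paper's route: rescale by $h^{-1/2}$ on $\Omega\setminus N(h)$, supply an independent copy of the noise on $N(h)=\{h=0\}$, and substitute $h^{1/2}f$ for the converse direction. Your truncation $f\mathbf{1}_{\{h>1/k\}}$ with an $L_2(\mathbb{P})$-limit makes rigorous the paper's direct insertion of $h^{-1/2}\mathbf{1}_{\Omega\setminus N(h)}f$, which for a general bounded $h$ need not lie in $L_2(\nu)$. In this setting the direct insertion is in fact harmless: $\Gamma=\Gamma_M+\Gamma_{HS}$ with $\Gamma_M\geq\rho$ and $\Gamma_{HS}$ compact, so $0$ is at most an isolated point of the spectrum and $h$ is bounded away from zero off $N(h)$.
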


\begin{proof}In the experiment \hyperref[M]{\LM}, we consider the random linear functional
\begin{equation} \label{eq:RLF1} f \, \mapsto \, \tilde{\Xi}_n^\nu\big(h^{-1/2} \cdot {\bf 1}_{\Omega\backslash N(h)} \cdot f\big) \, + \, \int {\bf 1}_{N(h)} \cdot f \, d\tilde{W}^\nu\,, \qquad f \in L_2(\Omega,\mathfrak{A},\nu)\,, \end{equation}
where $N(h) := h^{-1}(\{0\})$ and $\tilde{W}^\nu$ is an independent copy of $W^\nu$. Thus, it is generated independently of the observation. Note that the right-hand side of (\ref{eq:RLF1}) is equal to
\begin{equation*}
\big\langle f , h^{1/2}\cdot V\tilde{g}\big\rangle_\nu \, + \, \frac\sigma{\sqrt{n}} \int {\bf 1}_{\Omega\backslash N(h)} \cdot f \, dW^\nu \, + \, \frac\sigma{\sqrt{n}} \int {\bf 1}_{N(h)} \cdot f \, d\tilde{W}^\nu\,,
\end{equation*}
for any $f\in L_2(\Omega,\mathfrak{A},\nu)$. Above, we may replace $\tilde{W}^\nu$ by $W^\nu$ thanks to the independence of $\int a \,dW^\nu$ and $\int b \, dW^\nu$ for all orthogonal $a,b\in L_2(\Omega,\mathfrak{A},\nu)$. Therefore the map (\ref{eq:RLF1}) shares the same distribution with \eqref{eq:RLF2}, and
 we have shown that \hyperref[M]{\LM} may be transformed into the experiment \hyperref[N]{\LN}.
For the reverse transformation, just insert $h^{1/2}\cdot f$ instead of $f$ in (\ref{eq:RLF2}). Hence, equivalence of the experiments \hyperref[M]{\LM} and \hyperref[N]{\LN} follows. \end{proof}

Finally, let \hyperref[O]{\LO}\label{O} be the experiment in which the Gaussian process $R_n$ from (\ref{eq:Zn}) is observed.

\begin{proposition}
   The experiments \hyperref[N]{\LN} and \hyperref[O]{\LO} are equivalent.
    \end{proposition}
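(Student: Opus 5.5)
The plan is to show that the random linear functional of \hyperref[N]{\LN} and the process $R_n$ of \hyperref[O]{\LO} are images of each other under deterministic, $\tilde{g}$-free maps. We use the unitary operator $V$ to move between $L_2(\Omega,\mathfrak{A},\nu)$ and $L_2([0,1],\mathbb{R}^d)$, and we identify both observations with an isonormal Gaussian process carrying a drift.

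First we transport \hyperref[N]{\LN} back to $L_2([0,1],\mathbb{R}^d)$. Denote the observed functional in (\ref{eq:RLF2}) by $\Upsilon_n$ and set $\Upsilon_n'(f) := \Upsilon_n(Vf)$ for $f\in L_2([0,1],\mathbb{R}^d)$. Since $V^\top V f = f$, we get
$$ \Upsilon_n'(f) \, = \, \langle f , \Gamma^{1/2}\tilde g\rangle_{2,d} \, + \, \frac{\sigma}{\sqrt n} \int Vf \, dW^\nu\,. $$
By the definition of $W^\nu$, the noise term equals $\frac{\sigma}{\sqrt n}\sum_j \langle f,\xi_j\rangle_{2,d}\,\eta_j$, which is an isonormal Gaussian process on $L_2([0,1],\mathbb{R}^d)$: it is centered and linear with covariance $\frac{\sigma^2}{n}\langle f,f'\rangle_{2,d}$. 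The map $\Upsilon_n\mapsto\Upsilon_n'$ is invertible via $f\mapsto \Upsilon_n'(V^\top f)$, so this step is an exact equivalence.

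Next we compare with $R_n$. From $R_n$ we form $f\mapsto \int f(t)^\top dR_n(t)$. This gives $\langle f,\Gamma^{1/2}\tilde g\rangle_{2,d} + \frac{\sigma}{\sqrt n}\int f^\top d\tilde W$, and the second term is again isonormal with the same covariance. Both functionals are linear Gaussian with identical means and covariances, so all finite-dimensional marginals coincide, and by linearity (as in the previous propositions) the two functionals have the same law. Conversely, from the functional we recover $R_n$ by evaluating at $f = \mathbf{1}_{[0,t]}e_\ell$ for $t\in[0,1]$, $\ell=1,\ldots,d$, which gives $(R_n(t))_\ell$. The resulting process has the law of $R_n$, and we may take a continuous version, so the observation lives on the Polish space $C([0,1],\mathbb{R}^d)$.

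The main obstacle is the measurability and well-definedness of these transformations. A random linear functional on an infinite-dimensional Hilbert space is only defined almost surely for each fixed $f$, so the Markov kernels must be built from countably many evaluations. We would fix the orthonormal basis $\{\xi_j\}$ and regard both observations as the sequence of scores $\Upsilon_n'(\xi_j)$, resp.\ $\int\xi_j^\top dR_n$. These scores are independent $N(\langle \xi_j,\Gamma^{1/2}\tilde g\rangle_{2,d},\sigma^2/n)$ in both experiments, so both experiments are equivalent to the same sequence experiment on $\mathbb{R}^{\mathbb{N}}$. The functionals are then reconstructed via $L_2$-convergent series, exactly as $B_{1,n}$ was reconstructed from its scores in the proof of equivalence of \hyperref[I]{\LI} and \hyperref[J]{\LJ}.
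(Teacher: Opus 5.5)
Your proposal is correct and is essentially the paper's argument. Both proofs transport the functional through the unitary $V$ and note that the noise terms are identically distributed isonormal processes, with laws determined by matching means and covariances together with linearity. The only difference is direction: you insert $Vf$ into the \LN-functional, while the paper inserts $V^\top f$ into $\tilde R_n$; your added remarks on recovering $R_n$ from the test functions $\mathbf{1}_{[0,t]}e_\ell$ and on reducing to countably many scores just spell out what the paper treats as clear.
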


\begin{proof}Clearly,  \hyperref[O]{\LO} is equivalent to observing
$$ \tilde{R}_n \, :\, f \, \mapsto \, \int f \, dR_n \, = \, \big\langle f,\Gamma^{1/2} \tilde{g}\big\rangle_{2,d} \, + \, \frac\sigma{\sqrt{n}} \int f\, d\tilde{W}\,, \qquad f \in L_2([0,1],\mathbb{R}^d)\,. $$
Since $f\mapsto\int f\, dW^\nu$ and $f\mapsto\int V^\top f\, d\tilde{W}$ are identically distributed on the domain $f\in L_2(\Omega,\mathfrak{A},\nu)$ the experiments \hyperref[N]{\LN} and \hyperref[O]{\LO} are equivalent by just inserting $V^\top f$, $f\in L_2(\Omega,\mathfrak{A},\nu)$, instead of $f \in L_2([0,1],\mathbb{R}^d)$ in order to transform from \hyperref[O]{\LO} to \hyperref[N]{\LN}; and $V f$, $f\in L_2([0,1],\mathbb{R}^d)$, instead of $f \in L_2(\Omega,\mathfrak{A},\nu)$, for the other direction.\end{proof}

In summary, we have proved in this section  asymptotic equivalence of the experiment \hyperref[I]{\LI} from Section \ref{Loc} and the white noise experiment \hyperref[O]{\LO}, in which $R_n$ from (\ref{eq:Zn}) is observed, under the condition (\ref{eq:AB}).

\paragraph*{Conclusion} Combining all results so far we deduce that the original regression experiment \hyperref[A]{\LA} and the white noise model \hyperref[O]{\LO} are asymptotically equivalent under the conditions (\ref{eq:AB}) and (\ref{eq:Kn}). The smoothing parameter $K_n$ remains to be selected. We impose that $d=d_n \lesssim n^\alpha$ for some $\alpha\geq 0$; while, for $K_n$, we choose the approach $K_n \asymp n^\gamma$ for some $\gamma>0$. Choosing $\gamma$ such that
$$ (1+\alpha)/(2\beta) \, < \, \gamma \, < \, \big(2\beta - 2\alpha(4\beta+1)\big)/(2\beta+1)\,, $$
ensures validity of (\ref{eq:AB}) and (\ref{eq:Kn}). Then we are ready to state the following main result.
\begin{theo} \label{T:1}
Let the parameter space $\Theta$ consist of all functions
$$\tilde{g}(t) = \big(g_1(t),\ldots,g_d(t)\big)^\top,\quad t\in [0,1],
$$
whose components satisfy (\ref{eq:Hoelder}). The design density $p_X$ is assumed to be known and supported on $[0,1]^d$ with $\rho\leq p_X(t)\leq 1/\rho$ for all $t\in [0,1]^d$ and some fixed $\rho>0$. Let the dimension $d=d_n$ satisfy $d_n \lesssim n^\alpha$ for some $\alpha\geq 0$; and grant that
\begin{equation} \label{eq:T1} (2\beta+1)(\alpha+1) + 4\alpha\beta(4\beta+1) \, < \, 4\beta^2\,. \end{equation}
Then, the additive regression experiment \hyperref[A]{\LA} with $g$ as in (\ref{eq:1.2}) is asymptotically equivalent to the white noise experiment \hyperref[O]{\LO}, in which the stochastic process $R_n$ of (\ref{eq:Zn}) is observed.
\end{theo}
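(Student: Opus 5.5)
The proof chains the equivalences and asymptotic equivalences already established. The only genuinely new ingredient is the choice of the smoothing parameter $K_n$ and a check that condition (\ref{eq:T1}) makes (\ref{eq:AB}) and (\ref{eq:Kn}) satisfiable simultaneously.

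\textbf{Step 1: choice of $K_n$ and $J_n$.} Take $K_n \asymp n^\gamma$, rounded so that $K_n$ is an integer multiple of $J_n \asymp (n d_n^{-2})^{1/(2\beta+1)}$. With $d_n\lesssim n^\alpha$, condition (\ref{eq:AB}) holds once $1+\alpha-2\beta\gamma<0$, i.e.\ $\gamma>(1+\alpha)/(2\beta)$.

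For (\ref{eq:Kn}), the second summand is bounded by
\[
n^{\gamma + \alpha(2+4\beta/(2\beta+1)) - 2\beta/(2\beta+1)},
\]
which tends to zero if
\[
\gamma < \frac{2\beta - 2\alpha(4\beta+1)}{2\beta+1}.
\]
This uses $2+4\beta/(2\beta+1) = 2(4\beta+1)/(2\beta+1)$.

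The first summand of (\ref{eq:Kn}) is $K_n^{-1} n^{1/(2\beta+1)} d_n^{-2/(2\beta+1)}$. Since $d_n\geq 1$, it tends to zero once $\gamma>1/(2\beta+1)$. This is implied by $\gamma>(1+\alpha)/(2\beta)$, because $2\beta<2\beta+1$. The same inequality gives $K_n/J_n\to\infty$, so the divisibility constraint can be met.

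An admissible $\gamma$ exists if and only if
\[
\frac{1+\alpha}{2\beta} < \frac{2\beta-2\alpha(4\beta+1)}{2\beta+1}.
\]
Cross-multiplying, this is $(2\beta+1)(1+\alpha) < 4\beta^2 - 4\alpha\beta(4\beta+1)$, which is exactly (\ref{eq:T1}).

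\textbf{Step 2: chaining the experiments.}
\begin{itemize}
\item \hyperref[A]{\LA} is asymptotically equivalent to \hyperref[B]{\LB} under (\ref{eq:AB}).
\item \hyperref[B]{\LB}, \hyperref[C]{\LC} and \hyperref[D]{\LD} are equivalent.
\item After splitting the sample, \hyperref[A]{\LA} is asymptotically equivalent to \hyperref[DS]{\LDS}. This uses the first proposition applied to each half with $m$ and $n-m$ in place of $n$; these are of order $n$, so (\ref{eq:AB}) still suffices. The total variation of independent products is subadditive.
\item \hyperref[DS]{\LDS} is equivalent to \hyperref[E]{\LE}. \hyperref[E]{\LE} is asymptotically equivalent to \hyperref[F]{\LF} by Proposition \ref{prop: 2} and Lemma \ref{L:pilot}. \hyperref[F]{\LF} is equivalent to \hyperref[G]{\LG}, and \hyperref[G]{\LG} is asymptotically equivalent to \hyperref[H]{\LH} under (\ref{eq:Kn}).
\item \hyperref[H]{\LH} is equivalent to \hyperref[I]{\LI} and to \hyperref[J]{\LJ}.
\item \hyperref[J]{\LJ} is asymptotically equivalent to \hyperref[K]{\LK} under (\ref{eq:AB}).
\item \hyperref[K]{\LK} through \hyperref[O]{\LO} are all exactly equivalent.
\end{itemize}
Every approximation bound holds uniformly over $\Theta$. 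Because the Le Cam deficiency satisfies the triangle inequality, a finite chain of asymptotically equivalent experiments yields asymptotic equivalence of the endpoints.

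\textbf{Main obstacle.} The delicate point is bookkeeping rather than a new estimate. All constants must be uniform in $g\in\Theta$. Lemma \ref{L:pilot} must be applied with growing $d_n$, which gives the $d_n^3$ factor for $\widehat g_{n,1}$. Step 1 then checks that the resulting exponent conditions reduce precisely to (\ref{eq:T1}).

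One should also verify that the equivalences (as opposed to approximations) in Sections \ref{FT} involve only known objects: $p_X$, $\Gamma$, $V$ and $h$. These depend on $p_X$ but not on $\tilde g$, so the Markov kernels are parameter-free, as required.
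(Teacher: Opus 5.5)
Your proposal is correct and follows the paper's own argument. The paper likewise chains the (asymptotic) equivalences from \LA{} to \LO{} under \eqref{eq:AB} and \eqref{eq:Kn}, and then takes $K_n \asymp n^\gamma$ with $\gamma$ in exactly the interval $\big((1+\alpha)/(2\beta),\,(2\beta-2\alpha(4\beta+1))/(2\beta+1)\big)$, which is nonempty precisely under \eqref{eq:T1}. Your explicit checks that the first summand of \eqref{eq:Kn} and the divisibility condition $K_n/J_n\to\infty$ follow automatically from the lower bound on $\gamma$ are a small but welcome addition to the paper's terser conclusion.
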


Let us discuss condition (\ref{eq:T1}) of Theorem \ref{T:1}, which can be rewritten as
\[ \beta > \frac{1+\sqrt{5}}{4}, \qquad
0 \leq \alpha < \frac{4\beta^2 - 2\beta - 1}{16\beta^2 + 6\beta + 1}\,. \]
For arbitrary but fixed dimension $d$, that is $\alpha=0$, only $\beta > (\sqrt5 + 1)/4$ is required to satisfy the constraints of the theorem. Importantly, $(\sqrt5 + 1)/4$ is a smoothness level smaller than~$1$. On the other hand, if we allow the dimension $d=d_n$ to increase slowly with respect to $n$, then the assumptions of Theorem \ref{T:1} are fulfilled, for instance, under the standard Lipschitz continuity condition ($\beta=1$) and $\alpha < 1/23$.

Next, we focus on the operator $\Gamma = \Lambda\Lambda^\top$ whose root occurs in the drift term of the stochastic process $R_n$ in (\ref{eq:Zn}) in the experiment \hyperref[O]{\LO}. We deduce that $\Gamma$ can be written as the operator-valued matrix $\{\Gamma_{j,k}\}_{j,k=1,\ldots,d_n}$ whose entries are the linear operators
$$ \Gamma_{j,k} f \, := \, \begin{cases} \int f(t) \, p_{X,k,j}(t,\bullet) dt\,, & \qquad \mbox{for }j\neq k\,, \\
p_{X,j}(\bullet) \cdot f(\bullet)\,, & \qquad \mbox{otherwise,} \end{cases} $$
which map from the Hilbert space $L_2([0,1],\mathbb{R})$ to itself, where $p_{X,k,j}$ and $p_{X,j}$ denote the marginal densities of the covariate components $(X_{1,k},X_{1,j})$ and $X_{1,j}$, respectively. If the marginal densities $p_{X,j}$ are the uniform density on $[0,1]$ and $p_X$ is a copula density, then the operators $\Gamma_{j,j}$ coincide with the identity operator on $L_2([0,1],\mathbb{R})$.

\section{Special cases and extensions} \label{SCE}
%%%%%%%%%%%%%%%%%%%%%%%%%%%%%%%%%%%%%%%%%%%%%%%%%%

In specific cases, the operator $\Gamma$ possesses a pleasant information-theoretic structure, which we aim to exploit.

\subsection{Pairwise independent components of the covariates} \label{SCE.1}
%%%%%%%%%%%%%%%%%%%%%%%%%%%%%%%%%%%%%%%%%%%%%%%%%%%%%%%%%%%%%%%%%%%%%%%%%%%%

Now we address the special case in which all components $X_{1,k}$, $k=1,\ldots,d_n$, of the design random variable $X_1$ are pairwise independent and, thus, the bivariate marginal densities admit the decomposition $p_{X,k,j}(x_k,x_j) = p_{X,k}(x_k)\cdot p_{X,j}(x_j)$. Note that, when to each of the components of the parameter $\tilde{g}\in \Theta$ some constant functions are added which add to zero, then regression function $g$ in (\ref{eq:1.2}) remains the same in the original experiment \hyperref[A]{\LA}. Therefore, without any loss of generality, the stochastic process $R_n$ in the asymptotically equivalent model \hyperref[O]{\LO} may be written as
$$ R_n(t) \, = \, \int_0^t \big[\Gamma^{1/2} g^*\big](s)\, ds \, + \, g_0 \cdot \int_0^t \big[\Gamma^{1/2} e_1\big](s)\, ds \, + \, \frac{\sigma}{\sqrt{n}} \cdot \tilde{W}(t)\,, \qquad t\in [0,1]\,, $$
with the centered component functions $g_k^* := g_k - \int_0^1 g_k(s) p_{X,k}(s) ds$, the $\mathbb{R}^{d_n}$-valued function $g^* := (g_1^*,\ldots,g_{d_n}^*)^\top$, the $d_n$-dimensional unit vector $e_1 := (1,0,\ldots,0)^\top$ and the shift
$$g_0 \, := \, \int g(x) p_X(x) dx \, = \, \sum_{k=1}^{d_n} \int_0^1 g_k(s) p_{X,k}(s) ds\,, $$
resulting in the canonical representation $g(x) = g_0 + \sum_{k=1}^{d_n} g_k^*(x_k)$; and $\tilde{g} = g^* + g_0\cdot e_1$. \\

On the other hand, consider the experiment \hyperref[Q]{\LQ}\label{Q} in which one observes $h_1^* \sim N(g_0,\sigma^2/n)$ and the independent Gaussian processes
\begin{equation} \label{eq:Rnk*} R_{n,k}^{**}(t) \, := \, \int_0^t p_{X,k}^{1/2}(s) \cdot g_k^*(s)\, ds \, + \, \frac{\sigma}{\sqrt{n}} \cdot W_k^*(t)\,, \qquad t\in [0,1], \, k=1,\ldots,d_n\,, \end{equation}
with the components $W_k^*$ of $W^*$.

\begin{theo} \label{T:2}
Grant the conditions of Theorem \ref{T:1} and assume that all components $X_{1,k}$, $k=1,\ldots,d_n$, of the random design variable $X_1$ are pairwise independent. Then the original additive regression experiment \hyperref[A]{\LA} is asymptotically equivalent to the experiment \hyperref[Q]{\LQ}, in which the independent Gaussian processes $R_{n,k}^{**}$ in   (\ref{eq:Rnk*}) and -- independently of these -- a $N(g_0,\sigma^2/n)$-distributed random variable are observed.
\end{theo}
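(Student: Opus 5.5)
By Theorem \ref{T:1}, it suffices to show that the white noise experiment \hyperref[O]{\LO} is equivalent to \hyperref[Q]{\LQ} under pairwise independence. It is more convenient to work with the equivalent experiment \hyperref[L]{\LL}, in which the Gaussian linear functional
$$f \mapsto \langle f,\Gamma\tilde g\rangle_{2,d} + \frac{\sigma}{\sqrt n}\int [\Gamma^{1/2} f]\,d\tilde W$$
is observed. Its law is determined by the mean $\langle f,\Gamma\tilde g\rangle_{2,d}$ and the covariance form $\frac{\sigma^2}{n}\langle f,\Gamma f'\rangle_{2,d}$. So the task reduces to computing $\Gamma$ explicitly and diagonalizing it.

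Under pairwise independence we have $p_{X,k,j}=p_{X,k}\otimes p_{X,j}$. Hence, for $j\neq k$,
$$\Gamma_{j,k}f = p_{X,j}(\bullet)\int f\,p_{X,k},$$
while $\Gamma_{j,j}$ is multiplication by $p_{X,j}$. Write $f=(f_1,\dots,f_d)$ and decompose each $f_k=f_k^\circ+c_k$ with $c_k:=\int f_k p_{X,k}$, so that $\int f_k^\circ p_{X,k}=0$. This gives
$$\langle f,\Gamma f\rangle_{2,d}=\Big\|\sum_k f_k(x_k)\Big\|_{p_X}^2=\sum_k\int (f_k^\circ)^2p_{X,k}+\Big(\sum_k c_k\Big)^2,$$
because all cross terms vanish by pairwise independence. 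Similarly, for $\tilde g = g^*+g_0e_1$,
$$\langle f,\Gamma\tilde g\rangle_{2,d}=\sum_k\int f_k^\circ g_k^* p_{X,k}+g_0\sum_k c_k.$$
The observation therefore splits into a functional of the centered parts and a scalar part:
$$\Big(\int f_k^\circ(s)\,dY_k(s)\Big)_{k}\quad\text{and}\quad S:=\Xi_n(\mathbf 1,\dots),$$
where the scalar part is obtained, for example, by taking $f=(1,0,\dots,0)$, which has mean $g_0$ and variance $\sigma^2/n$. Here $Y_k$ denotes the process with drift $p_{X,k}g_k^*$ and noise level $\sigma\sqrt{p_{X,k}}$. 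The covariance computation shows that $S$ is independent of the centered parts, and that centered parts belonging to different $k$ are independent. The $N(g_0,\sigma^2/n)$ variable $h_1^*$ of \hyperref[Q]{\LQ} is thus matched exactly by $S$.

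For each $k$ it remains to identify the experiment given by the Gaussian functional on the subspace $\{f_k:\int f_kp_{X,k}=0\}$, with mean $\int f_k g_k^*p_{X,k}$ and variance $\frac{\sigma^2}{n}\int f_k^2p_{X,k}$, with the observation of $R^{**}_{n,k}$. Substituting $u_k=f_k p_{X,k}^{1/2}$ turns this into observing $\int u_k\,dR^{**}_{n,k}$ for all $u_k$ orthogonal in $L_2[0,1]$ to $p_{X,k}^{1/2}$. This is the process $R^{**}_{n,k}$ with one direction removed. The missing coordinate $\int p_{X,k}^{1/2}\,dR^{**}_{n,k}$ has mean
$$\int p_{X,k}g_k^*=0,$$
so it is a pure $N(0,\sigma^2/n)$ variable independent of the remaining coordinates. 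It is therefore ancillary: it can be simulated when passing from \hyperref[O]{\LO} to \hyperref[Q]{\LQ}, and discarded in the reverse direction. Combining the $d$ independent blocks with $S$, and using the linearity argument already employed in the propositions of Section \ref{FT} to pass from finite-dimensional marginals to the laws of the random functionals, yields exact equivalence of \hyperref[L]{\LL} and \hyperref[Q]{\LQ}. Together with Theorem \ref{T:1} this proves the claim.

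The main obstacle is bookkeeping the non-identifiability of constants. The parameter $\tilde g$ and its representation $g^*+g_0e_1$ must give the same experiment, which holds because $\Gamma$ annihilates constant vectors summing to zero (as $\Lambda^\top$ does). One must also check that the independence structure of the scalar part and the $d$ centered parts holds exactly, not just asymptotically.
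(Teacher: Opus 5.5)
Your proof is correct, and its skeleton matches the paper's. Both reduce to \LO{} via Theorem \ref{T:1} and split off the score $\tilde\Xi_n(e_1)=\int_0^1[\Gamma^{1/2}e_1]^\top dR_n$ that carries $g_0$ (this is the paper's $\widehat h_1$). Both use pairwise independence to see that $\Gamma$ acts as componentwise multiplication by $p_{X,k}$ on centered functions, and both dispose of pure-noise directions by ancillarity.

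The mechanics differ. The paper works on the parameter side. It rewrites $\tilde g=g^*+g_0e_1$, completes $h_1=\Gamma^{1/2}e_1$ to an orthonormal basis, and passes to an intermediate experiment \LP{} by an independent-copy argument. It then applies Cameron--Martin and the Neyman--Fisher lemma in both \LP{} and \LQ{}, reducing each to a Gaussian functional indexed by $L_2^0([0,1],\mathbb{R}^d)$ whose laws coincide. In that route the $d$ pure-noise directions $p_{X,k}^{1/2}$ of the processes $R_{n,k}^{**}$ are discarded implicitly through sufficiency.

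You instead decompose the test functions as $f=f^\circ+c$ and compute the mean and covariance of the observed functional on all of $L_2([0,1],\mathbb{R}^d)$. This directly exhibits exact independence of the scalar block and the $d$ centered blocks. It also gives explicit transformations in both directions: simulating $\int p_{X,k}^{1/2}\,dR_{n,k}^{**}$ in one direction and discarding it in the other.

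Your route is more elementary and needs no factorization lemma. It also makes two things transparent: pairwise independence enters only through the vanishing cross terms, and the non-identifiable constants are harmless because the mean depends on $\tilde g$ only through $\Lambda^\top\tilde g=g$. The paper's route is more compact, but it leans on the infinite-dimensional Neyman--Fisher lemma and an auxiliary experiment.
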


\begin{proof}
Writing $h_1 := \Gamma^{1/2} e_1$ we deduce that
$$ \|h_1\|_{2,d}^2 \, = \, \langle e_1 , \Gamma e_1\rangle_{2,d} \, = \, \int_0^1 p_{X,1}(s) ds \, = \, 1\,,$$
and that $\Gamma$ -- restricted to such centered functions -- reduces to a componentwise multiplication operator
\begin{equation} \label{eq:Gammaw}\Gamma g^* \, = \, (p_{X,1}\cdot g_1^*,\ldots,p_{X,d_n} \cdot g_{d_n}^*)^\top\,, \end{equation} as well as
$$ \langle h_1 , \Gamma^{1/2} g^*\rangle_{2,d} \, = \, \langle e_1 , \Gamma g^*\rangle_{2,d} \, = \, \int_0^1 p_{X,1}(s) \, g_1^*(s) \, ds \, = \, 0\,.$$
There exist functions $h_j \in L_2([0,1],\mathbb{R}^d)$, integer $j\geq 2$, such that the $h_j$, integer $j\geq 1$, form an orthonormal basis of $L_2([0,1],\mathbb{R}^d)$. Clearly, the experiment \hyperref[O]{\LO} is equivalent to observing all scores $\widehat{h}_j := \int_0^1 h_j(t)^\top \, dR_n(t)$, $j\geq 1$, where
\begin{align*}
\widehat{h}_1 & \, = \, g_0 \, + \, \frac{\sigma}{\sqrt{n}} \cdot \int_0^1 h_1(t)^\top \, d\tilde{W}(t)\,, \\
\widehat{h}_j & \, = \, \int_0^1 h_j(t)^\top \, \big[\Gamma^{1/2}g^*](t) \, dt \, + \, \frac{\sigma}{\sqrt{n}} \cdot \int_0^1 h_j(t)^\top \, d\tilde{W}(t)\,, \qquad j\geq 2\,,
\end{align*}
so that all these scores are independent. Hence the random sequences $(\widehat{h}_j)_{j\geq 1}$ and $(h^*_j)_{j\geq 1}$ are identically distributed where $h^*_1 := \widehat{h}_1$ and
$$ h^*_j \, := \, \int_0^1 h_j(t)^\top dR_n^*(t)\,, \qquad j\geq 2\,,$$
with
$$ R_n^*(t) \, := \, \int_0^t \big[\Gamma^{1/2}g^*](s) \, ds \, + \, \frac{\sigma}{\sqrt{n}} \cdot W^*(t)\,, \qquad t\in [0,1]\,,$$
for an independent copy $W^*$ of the stochastic process $\tilde{W}$. \\

Consider that
$$ \tilde{h}_1 \, := \, \int_0^1 h_1(t)^\top dR_n^*(t) \, = \, \frac{\sigma}{\sqrt{n}} \cdot \int_0^1 h_1(t)^\top dW^*(t)\,,$$
from what follows conditional ancillarity of the statistic $\tilde{h}_1$ given $h^*_j$, $j\geq 1$, in the statistical experiment \hyperref[P]{\LP}\label{P}, in which one observes $h_1^*$ and the stochastic process $R_n^*$ on its domain $[0,1]$. \\

We have shown equivalence of the experiments \hyperref[O]{\LO} and \hyperref[P]{\LP}. By the Cameron-Martin theorem, the probability measure $\mathbb{P}_{R_n^*}$ of $R_n^*$ has the Radon-Nikodym density
$$ \frac{d\mathbb{P}_{R_n^*}}{d\mathbb{P}_{W^*_{\sigma,n}}}(W^*_{\sigma,n}) \, = \, \exp\Big( \frac{n}{\sigma^2} \int_0^1 \big[\Gamma^{1/2} g^*\big]^\top(s) \, dW^*_{\sigma,n}(s)\Big) \cdot \exp\Big(- \frac{n}{2\sigma^2} \, \|\Gamma^{1/2} g^*\|_{2,d}^2\Big)\,,$$
with respect to the probability measure $\mathbb{P}_{W^*_{\sigma,n}}$ of $W^*_{\sigma,n} := \sigma W^*/\sqrt{n}$ where $g^*$ is located in the closed linear subspace
\begin{align*} & L_2^0([0,1],\mathbb{R}^d) \\ & \hspace{1cm} := \Big\{f=(f_1,\ldots,f_d)^\top \in L_2([0,1],\mathbb{R}^d)  :  \int_0^1 f_k(t) p_{X,k}(t) dt \, = \, 0\,, \forall k=1,\ldots,d\Big\}, \end{align*}
of $L_2([0,1],\mathbb{R}^d)$, which is therefore a Hilbert space itself. Hence, the Neyman-Fisher lemma yields sufficiency of the statistic which consists of $h_1^*$ and the random linear functional
\begin{align} \nonumber
f \, \mapsto \, & \int_0^1 \big[\Gamma^{1/2} f]^\top(t) \, dR_n^*(t) \\ \label{eq:Gammaw1} & \, = \, \langle f , \Gamma g^*\rangle_{2,d} \, + \, \frac{\sigma}{\sqrt{n}} \int_0^1 \big[\Gamma^{1/2} f\big]^\top(t) \, dW^*(t)\,,
\end{align}
on the domain $L_2^0([0,1],\mathbb{R}^d)$ in the experiment \hyperref[P]{\LP}. Note that
\begin{equation} \label{eq:L0f}
\Gamma f \, = \, \big(p_{X,1} \cdot f_1 , \ldots , p_{X,d}\cdot f_{d}\big)^\top\,,
\end{equation}
for all $f = (f_1,\ldots,f_{d})^\top \in L_2^0([0,1],\mathbb{R}^d)$.

Writing $R_n^{**} := (R_{n,1}^{**}, \ldots, R_{n,d}^{**})^\top$, we use the Cameron-Martin theorem and the Neyman-Fisher lemma again to show that $h_1^*$ and the random linear functional
\begin{align} \nonumber f = (f_1,\ldots,f_d)^\top  \, \mapsto \, & \int_0^1 \big(p_{X,k}^{1/2}(t) \cdot f_k(t)\big)_{k}^\top\, dR_n^{**}(t) \\ \label{eq:Gammaw2}
& \, = \, \big\langle f , \big(p_{X,k}\cdot g_k^*\big)_{k}\big\rangle_{2,d} \, + \, \frac{\sigma}{\sqrt{n}} \, \int_0^1 \big(p_{X,k}^{1/2}(t)\cdot f_k(t)\big)_{k}^\top \, dW^*(t)\,,
\end{align}
on the domain $L_2^0([0,1],\mathbb{R}^d)$, form a sufficient statistic in the experiment \hyperref[Q]{\LQ}. We realize that, when any joint deterministic $f\in L_2^0([0,1],\mathbb{R}^d)$ is inserted in the linear functionals (\ref{eq:Gammaw1}) and (\ref{eq:Gammaw2}), the corresponding random variables share the same distribution $N\big(\langle f,\Gamma g^*\rangle_{2,d} \, , \, \sigma^2 \langle f,\Gamma f\rangle_{2,d}/n\big)$ thanks to (\ref{eq:Gammaw}) and (\ref{eq:L0f}). The linearity of both functionals yields that random functionals are identically distributed. This implies that the experiments \hyperref[P]{\LP} and \hyperref[Q]{\LQ} are equivalent.
\end{proof}

Further asymptotic equivalence can be established if we fix that the dimension $d=d_n$ is bounded. The results of \cite{BL1996} yield that the observation of the process $R_{n,k}^{**}$ becomes asymptotically equivalent to the univariate Gaussian regression model in which one observes $n$  i.i.d.~regression data under the design density $p_{X,k}$, the regression function $g_k^*$ and the error variance $\sigma^2$. Combining this with Theorem \ref{T:2} we obtain

\begin{cor} \label{T:3}
Grant the assumptions of Theorem \ref{T:1} and \ref{T:2}. In addition, impose $\sup_n d_n < \infty$. Then the original additive regression model \hyperref[A]{\LA} is asymptotically equivalent to the statistical experiment \hyperref[R]{\LR}\label{R} in which independent univariate regression data $(X_{j,k}^*,Y_{j,k}^*)$, $j=1,\ldots,n$, $k=1,\ldots,d_n$, and -- independently of these -- a $N(g_0,\sigma^2/n)$-distributed random variable are observed where $X_{j,k}^*$ has the density $p_{X,k}$ and $$Y_{j,k}^*\mid X_{j,k}^* \sim N\big(g_k^*(X_{j,k}^*),\sigma^2\big)\,.$$
\end{cor}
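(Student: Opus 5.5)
By Theorem \ref{T:2}, \hyperref[A]{\LA} is asymptotically equivalent to \hyperref[Q]{\LQ}. Asymptotic equivalence is transitive: the Le Cam deficiencies satisfy a triangle inequality, so composing Markov kernels adds the total variation errors. It therefore suffices to show that \hyperref[Q]{\LQ} and \hyperref[R]{\LR} are asymptotically equivalent.

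Both experiments are products of $d_n+1$ independent components. One component is the same $N(g_0,\sigma^2/n)$ variable in both. The other $d_n$ components are, for each $k$, the process $R_{n,k}^{**}$ in \hyperref[Q]{\LQ} and the univariate regression sample $(X_{j,k}^*,Y_{j,k}^*)_{j\le n}$ in \hyperref[R]{\LR}. For each $k$ I will construct the kernels pairing these two componentwise, as in \cite{BL1996}, and let them act independently across $k$ and as the identity on the Gaussian shift variable. Total variation between product measures is at most the sum of the componentwise total variations. Hence the deficiency between \hyperref[Q]{\LQ} and \hyperref[R]{\LR} is bounded by
$$\sum_{k=1}^{d_n}\delta_{n,k}\le \Big(\sup_n d_n\Big)\cdot\max_{k}\delta_{n,k},$$
where $\delta_{n,k}$ denotes the componentwise deficiency, uniformly over the parameter. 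The condition $\sup_n d_n<\infty$ is used exactly here.

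It remains to show $\max_k \delta_{n,k}\to 0$ uniformly over the parameter class. The parameter of the $k$th component is $g_k^*=g_k-\int_0^1 g_k p_{X,k}$. It is H\"older with exponent $\beta$ and constant $C$, and bounded by $2C$, so it lies in a fixed H\"older ball. The design density $p_{X,k}$ is known, supported on $[0,1]$ and bounded between $\rho$ and $1/\rho$, since $p_{X,k}$ is a marginal of $p_X$. Moreover $\beta>(1+\sqrt5)/4>1/2$ under (\ref{eq:T1}), which is the smoothness regime of \cite{BL1996}. Their random-design result then gives equivalence of regression data with design density $p_{X,k}$ and the white noise model with drift $p_{X,k}^{1/2}g_k^*$, which is precisely $R_{n,k}^{**}$.

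The main obstacle is uniformity in $k$. The kernels of \cite{BL1996} and their error bounds must be controlled only through $\rho$, $C$, $\beta$ and $\sigma$, not through the specific $p_{X,k}$. I would check that their deficiency bound, obtained via the binning and Hellinger estimates analogous to (\ref{eq:Hell}), depends on the design density only through its upper and lower bounds, with no smoothness assumption on $p_{X,k}$. If their statement assumes a smoother design density, I would instead rerun the present paper's scheme with $d=1$, namely Sections \ref{2}--\ref{FT}, where $\Gamma$ reduces to multiplication by $p_{X,k}$. That scheme uses no smoothness of $p_X$, and its constants depend only on $\rho$, $\sigma$ and $C$. The resulting bound is uniform in $k$, and summing over the at most $\sup_n d_n$ components completes the proof.
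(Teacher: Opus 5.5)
Your proposal is correct and follows the paper's argument: Theorem~\ref{T:2} gives the equivalence of \hyperref[A]{\LA} and \hyperref[Q]{\LQ}, the result of \cite{BL1996} is applied to each process $R_{n,k}^{**}$ separately, and the boundedness of $d_n$ lets you sum the finitely many componentwise total variation errors. You add two things the paper leaves implicit: uniformity over the possibly $n$-dependent marginal densities $p_{X,k}$, and the fallback of applying Theorem~\ref{T:1} with $d=1$, where $\Gamma$ is multiplication by $p_{X,k}$ and all constants depend only on $\rho$, $\sigma$ and $C$; together they make the argument self-contained.
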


\subsection{Further approximation in semiparametric submodels} \label{SCE.2}
%%%%%%%%%%%%%%%%%%%%%%%%%%%%%%%%%%%%%%%%%%%%%%%%%%%%%%%%%%%%%%%%%%%%%%%%%%%%

We turn to the general setting of the experiment \hyperref[O]{\LO} again, where the components of the covariates are not necessarily pairwise independent. Moreover, we consider the dimension $d$ as bounded. By $\{\xi_\ell\}_{l\geq 1}$ we fix an arbitrary orthonormal basis of the Hilbert space $L_2([0,1],\mathbb{R}^d)$. In this subsection we impose that the true function $\tilde{g}$ lies in
$$ \Theta_L := \Theta \cap \mbox{span}(\xi_\ell : \ell \in L)\,, $$
where $L=L_n$ is some finite subset of $\mathbb{N}$ which may depend on $n$. We define
\begin{equation} \label{eq:Snj} S_{n,j}^*(t) \, := \, \int_0^t p_{X,j}^{1/2} (s) \cdot g_j(s) \, ds \, + \, \frac{\sigma}{\sqrt{n}} \, W_j(t)\,, \quad t\in [0,1]\,, \end{equation}
and $\tilde{g} \in \Theta_L$.

\begin{theo} \label{T:SCE.1}
Consider the parameter space $\Theta_L$ which is a subset of $\Theta$ from Theorem \ref{T:1} and of the linear hull of the $\xi_\ell$, $\ell\in L$, for a fixed but arbitrary orthonormal basis $(\xi_\ell)_{\ell\geq 1}$ of $L_2([0,1],\mathbb{R}^d)$ and a finite set $L=L_n$ containing only positive integers. Assume that $\min L \to\infty$ as $n$ tends to $\infty$. Grant the conditions of Theorem \ref{T:1}; and that the dimension $d$ does not depend on $n$. Then the experiment \hyperref[A]{\LA} is asymptotically equivalent to the experiment \hyperref[S]{\LS}\label{S}, in which the process $S_n^* := (S_{n,j}^*)_{j=1,\ldots,d}$ from (\ref{eq:Snj}) is observed.
\end{theo}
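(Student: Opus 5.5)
By Theorem \ref{T:1} (its conditions are granted, and restricting the parameter space to $\Theta_L\subseteq\Theta$ keeps the asymptotic equivalence), it suffices to show that \hyperref[O]{\LO} and \hyperref[S]{\LS} are asymptotically equivalent uniformly over $\tilde g\in\Theta_L$.

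Both experiments observe a $d$-dimensional process on $[0,1]$ with the same noise $\sigma\tilde W/\sqrt n$. They differ only in the drift: $\Gamma^{1/2}\tilde g$ in \hyperref[O]{\LO} versus $D^{1/2}\tilde g$ in \hyperref[S]{\LS}, where $D$ denotes the componentwise multiplication operator $f\mapsto (p_{X,j} f_j)_{j}$. By the Cameron--Martin theorem and Pinsker's inequality, the total variation distance is at most
\[
\frac{\sqrt{n}}{2\sigma}\,\big\|(\Gamma^{1/2}-D^{1/2})\tilde g\big\|_{2,d}.
\]
This is too crude as stated, since no rate on $\min L$ is assumed. So I would not compare the drifts directly, but work on the level of sufficient linear functionals, as in the proof of Theorem \ref{T:2}.

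\textbf{Step 1 (decomposition of $\Gamma$).} Write $\Gamma = D + H$, where $H$ is the off-diagonal operator matrix $\{\Gamma_{j,k}\}_{j\neq k}$. Each entry is an integral operator with kernel $p_{X,k,j}\le \rho^{-1}$ on $[0,1]^2$. Since $d$ is fixed, $H$ is therefore a Hilbert--Schmidt operator, hence compact.

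\textbf{Step 2 (sufficient statistics and Hellinger comparison).} In \hyperref[O]{\LO}, the functional $f\mapsto \int (\Gamma^{1/2}f)^\top dR_n$ is sufficient. For $f$ in the finite-dimensional space $\mathrm{span}(\xi_\ell:\ell\in L)$, it is Gaussian with mean $\langle f,\Gamma\tilde g\rangle$ and covariance $\sigma^2\langle f,\Gamma f\rangle/n$. In \hyperref[S]{\LS}, the analogous functional $f\mapsto\int (D^{1/2}f)^\top dS_n^*$ has mean $\langle f,D\tilde g\rangle$ and covariance $\sigma^2\langle f,Df\rangle/n$. Restricting to the finite-dimensional span yields two Gaussian vectors: $N(P\Gamma\tilde g,\sigma^2 P\Gamma P/n)$ and $N(PD\tilde g,\sigma^2 PDP/n)$, with $P$ the projection onto the span. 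After rescaling, these are two Gaussian shift experiments with covariances $A=P\Gamma P$ and $B=PDP$. Both satisfy $A,B\ge \rho\, P$, since $\Gamma=\Lambda\Lambda^\top$ and $\|\Lambda^\top f\|_{p_X}^2\ge\rho\|f\|^2$ up to the identifiability constants. Whitening each experiment (in the spirit of the standardization step in Section \ref{3}) reduces the comparison to $N(A^{1/2}\theta,\sigma^2 I/n)$ versus $N(B^{1/2}\theta,\sigma^2 I/n)$, together with a covariance discrepancy. The Hellinger distance is controlled by $\|B^{-1/2}(A-B)B^{-1/2}\|_{HS}$ plus $n\|(A^{1/2}-B^{1/2})\theta\|^2$. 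Here $A-B = PHP$.

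\textbf{Step 3 (main obstacle).} The key step is showing that $\|PHP\|_{HS}\to0$ and $n\sup_{\theta}\|(A^{1/2}-B^{1/2})\theta\|^2\to0$. The first follows because $H$ is Hilbert--Schmidt: the quantity $\sum_{\ell,\ell'\in L}\langle\xi_\ell,H\xi_{\ell'}\rangle^2$ is a tail of a convergent double series, since $\min L\to\infty$. The second term is the one I expect to be hardest, because of the factor $n$. I would first try to avoid it by building the Markov kernels so that the mean maps exactly. Since $A$ and $B$ are known and invertible on the span, one can transform an observation $Y\sim N(A\theta,\sigma^2A/n)$ into $BA^{-1}Y\sim N(B\theta,\sigma^2BA^{-1}B/n)$. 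Then only covariance mismatch remains, which is controlled via $\|A^{-1/2}(A-B)A^{-1/2}\|_{HS}\lesssim\rho^{-1}\|PHP\|_{HS}\to0$, uniformly in $\theta$. The reverse direction is symmetric. Finally, the components of $R_n$ and $S_n^*$ orthogonal to the span carry no information about $\tilde g\in\Theta_L$, so they can be generated independently. This would complete the equivalence of \hyperref[O]{\LO} and \hyperref[S]{\LS}, and with Theorem \ref{T:1} the claim.
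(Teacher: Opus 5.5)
Your proposal is correct and, once the whitening detour of Step 2 is dropped in favour of Step 3, it is essentially the paper's proof. In Step 3 you map the means exactly with the known invertible matrices and then compare only covariances via the Hilbert--Schmidt tail $\sum_{\ell,\ell'\in L}\langle \xi_\ell, H\xi_{\ell'}\rangle^2\to 0$; the paper does the same, comparing $N(\theta_L,\sigma^2(\Gamma^{[L]})^{-1}/n)$ with $N(\theta_L,\sigma^2(\Gamma_M^{[L]})^{-1}/n)$ through a Frobenius-norm Gaussian bound.

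One small slip: $A\geq\rho P$ is false in general, since $\Gamma$ annihilates constant vectors $(c_1,\ldots,c_d)^\top$ with $\sum_j c_j=0$. What does hold, and suffices for large $n$, is $A\geq(\rho-\|PHP\|_{HS})P$; alternatively, whiten with $B=PDP\geq\rho P$ as the paper does.
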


\begin{proof}
Again, we use the Cameron-Martin theorem to derive
\begin{align*} \frac{d\mathbb{P}_{R_n}}{d \mathbb{P}_{W_{\sigma,n}}}(W_{\sigma,n})  \, = \, & \exp\Big(\frac{n}{\sigma^2} \sum_{\ell\in L} \langle \tilde{g} , \xi_\ell\rangle_{2,d} \cdot \int_0^1 \big[\Gamma^{1/2} \xi_\ell \big]^\top(s) \, dW_{\sigma,n}(s)\Big) \\ & \cdot \exp\Big(-\frac{n}{2\sigma^2} \big\|\Gamma^{1/2} \tilde{g}\big\|_{2,d}^2\Big)\,, \end{align*}
and the Neyman-Fisher factorization lemma to show that the scores $\int_0^1 [\Gamma^{1/2}\xi_\ell]^\top(s) dR_n(s)$, $\ell\in L$, are a sufficient statistic in the experiment \hyperref[O]{\LO} under these constraints. These scores form the random vector $R'_n \sim N\big(\Gamma^{[L]}\theta_L \, , \, \sigma^2 \Gamma^{[L]}/n\big)$ with the parameter vector $\theta_L := (\langle \xi_\ell , \tilde{g}\rangle_{2,d})_{\ell\in L}$ and the known matrix
\begin{equation} \label{eq:GammaL} \Gamma^{[L]} := (\langle \xi_\ell\, , \, \Gamma \xi_{\ell'}\rangle_{2,d})_{\ell,\ell' \in L}. \end{equation}
The operator $\Gamma$ can be decomposed by $\Gamma = \Gamma_M + \Gamma_{HS}$ with the componentwise multiplication operator
$$ \Gamma_M \, : \, \tilde{f} = (f_1,\ldots,f_d)^\top \, \mapsto \, (p_{X,1}\cdot f_1,\ldots,p_{X,d}\cdot f_d)^\top\,, $$
and the off-diagonal operator
$$ \Gamma_{HS} \, : \, \tilde{f} = (f_1,\ldots,f_d)^\top \, \mapsto \, \Big( \sum_{k\neq j}^d \int_0^1 f_k(t) \, p_{X,k,j}(t,\bullet)\, dt\Big)_{j=1,\ldots,d}^\top\,. $$
Both $\Gamma_M$ and $\Gamma_{HS}$ are self-adjoint linear operators from $L_2([0,1],\mathbb{R}^d)$ to itself where $\Gamma_M$ is a positive-definite bijection which satisfies the ellipticity condition
\begin{equation} \label{eq:ellip} \langle \tilde{f} , \Gamma_M \tilde{f}\rangle_{2,d} \, \geq \, \rho \cdot \|\tilde{f}\|_{2,d}^2 \, , \qquad \forall \tilde{f} \in L_2([0,1],\mathbb{R}^d)\,, \end{equation}
while $\Gamma_{HS}$ is a Hilbert-Schmidt operator with the squared Hilbert-Schmidt norm
\begin{align} \nonumber  \|\Gamma_{HS}\|_{HS}^2 & \, = \, \sum_{\ell\geq 1} \|\Gamma_{HS}\xi_\ell\|_{2,d}^2 \, = \, \sum_{j=1}^d \int_0^1 \sum_{\ell\geq 1} \Big(\sum_{k=1}^d \int_0^1 \xi_{\ell,k}(t) p_{X,k,j}(t,s) dt\Big)^2 ds \\ \label{eq:SCE.2.1} & \, = \, \sum_{j\neq k}^d \iint p_{X,k,j}^2(t,s) \, dt \, ds \, \leq \, d(d-1)/\rho^2\,, \end{align}
with $\xi_\ell = (\xi_{\ell,k})^\top_{k=1,\ldots,d}$ where we put $p_{X,k,k} :\equiv 0$. We introduce the positive definite matrix $\Gamma_M^{[L]} := \{\langle \xi_\ell \, , \, \Gamma_M \xi_{\ell'}\rangle_{2,d}\}_{\ell,\ell'\in L}$, where the spectral norm of its  inverse is bounded from above by $1/\rho$ thanks to (\ref{eq:ellip}). The squared Frobenius distance between the matrices $\Gamma^{[L]}$ and $\Gamma_M^{[L]}$ is bounded from above as follows,
$$ \big\| \Gamma^{[L]} - \Gamma_M^{[L]}\big\|_{F}^2 \, = \, \sum_{\ell,\ell'\in L} \langle \xi_{\ell'} , \Gamma_{HS}\xi_\ell\rangle_{2,d}^2 \, \leq \,\sum_{\ell\in L} \|\Gamma_{HS}\xi_l\|_{2,d}^2\,,  $$
where the right side of the inequality converges to zero whenever $\min L$ tends to infinity as $n\to \infty$ thanks to (\ref{eq:SCE.2.1}). This suffices to guarantee invertibility of the matrix $\Gamma^{[L]}$ for $n$ sufficiently large. Thus observing $R'_n$ is asymptotically equivalent to observing $R''_n \sim N\big(\theta_L , \sigma^2 (\Gamma^{[L]})^{-1} / n\big)$. Furthermore,
$$ \big\| \big(\Gamma_M^{[L]}\big)^{-1/2} \big(\Gamma^{[L]} - \Gamma_M^{[L]}\big) \big(\Gamma_M^{[L]}\big)^{-1/2} \big\|_{F}^2 \, \leq \, \rho^{-2} \cdot \big\| \Gamma^{[L]} - \Gamma_M^{[L]}\big\|_{F}^2\,, $$
converges to zero as $\min L \to \infty$. By the inequality (A.4) in \cite{R2011} and the fact that $\|I_L - C_n^{-1}\|_F$ converges to zero whenever $\|I_L - C_n\|_F$ does so (for any sequence $(C_n)_n$ of invertible $L\times L$-matrices), the experiment \hyperref[O]{\LO} -- under the restricted parameter space $\Theta_L$ -- is asymptotically equivalent to the experiment in which the random variable $S''_n \sim N\big( \theta_L \, , \, \sigma^2(\Gamma_M^{[L]})^{-1}/n\big)$ is observed.

Repeating the arguments from the beginning of this proof in reverse order with $\Gamma$ being replaced by $\Gamma_M$, the statistic $S_n' \sim N\big(\Gamma_M^{[L]} \theta_L , \sigma^2 \Gamma_M^{[L]}/n\big)$ represents a sufficient statistic in the experiment \hyperref[S]{\LS} in which one observes the $\mathbb{R}^d$-valued stochastic process $S_n^* = (S^*_{n,j})_{j=1,\ldots,d}$ on the domain $[0,1]$.
\end{proof}

Experiment \hyperref[S]{\LS} is significantly easier to handle than experiment \hyperref[O]{\LO} as we get rid of the off-diagonal operator $\Gamma_{HS}$. Note that $\min L$ may tend to infinity arbitrarily slowly so that Theorem \ref{T:SCE.1} is useful to deduce asymptotic minimax lower bounds in plenty of settings of nonparametric curve estimation. This also explains that sharp asymptotic results from nonparametric univariate regression are attainable for any component function in  \cite{HKM2006}. On the other hand, for the estimation of functionals of $\tilde{g}$ when parametric rates are achievable, one should consider the experiment \hyperref[O]{\LO}, in which the off-diagonal operator is involved.

\begin{rem}[Unknown design density]\normalfont\label{rem: 6.4}
Finally, since all results have been established under the assumption of known design density $p_X$, we will now address the question whether the sufficient statistic $S_n''$ in the experiment \hyperref[S]{\LS} can also be asymptotically obtained in the original regression experiment \hyperref[A]{\LA} under unknown $p_X$ and the conditions of Theorem \ref{T:SCE.1}. In the model \hyperref[A]{\LA} we construct the statistic
$$ \widehat{S}_n \, := \, \Big(\frac1n \sum_{j=1}^n Y_j \sum_{k=1}^d \xi_{\ell, k}(X_{j,k})\Big)_{\ell\in L}\,, $$
with $X_j = (X_{j,1},\ldots,X_{j,d})^\top$ where $\widehat{S}_n$ has the conditional distribution  $N\big(\widehat{\Gamma}^{[L]} \theta_L \, , \, \sigma^2 \widehat{\Gamma}^{[L]}/n\big)$ given $X_1,\ldots,X_n$ and
$$\widehat{\Gamma}^{[L]} = \Big\{\frac1n \sum_{j=1}^n \sum_{k,k'=1}^d \xi_{\ell,k}(X_{j,k}) \xi_{\ell',k'}(X_{j,k'})\Big\}_{\ell,\ell'\in L}\,. $$
We derive $\mathbb{E} \widehat{\Gamma}^{[L]} = \Gamma^{[L]}$ (see \eqref{eq:GammaL}) and that
\begin{align} \nonumber \mathbb{E} \big\|\widehat{\Gamma}^{[L]} - \Gamma^{[L]}\big\|_F^2 & \, \leq \, \frac1n \sum_{\ell,\ell'\in L} \mathbb{E}\Big(\sum_{k,k'=1}^d \xi_{\ell,k}(X_{1,k}) \xi_{\ell',k'}(X_{1,k'})\Big)^2 \\ \nonumber & \, \leq \, \frac1n \sum_{\ell,\ell'\in L} d^2 \sum_{k=1}^d \|\xi_{\ell,k}\|_\infty^2 \cdot \mathbb{E} \sum_{k'=1}^d \xi_{\ell',k'}^2(X_{1,k'}) \\ \label{eq:upper} & \, \leq \, \frac1n \cdot (\# L)^2 \cdot d^2\cdot \rho^{-1} \, \max_{\ell\in L} \sum_{k=1}^d \|\xi_{\ell,k}\|_\infty^2\,. \end{align}
Thus, if the conditions of Theorem \ref{T:SCE.1} are granted and the right side of (\ref{eq:upper}) converges to $0$, then we can use the previous arguments (including (A.4) in \cite{R2011}) to show that the total variation distance between the distribution of $(\widehat{\Gamma}^{[L]})^{-1}\widehat{S}_n$ and that of $S''_n$ in the experiment \hyperref[S]{\LS} under known $p_X$ vanishes asymptotically uniformly with respect to $\tilde{g}\in \Theta_L$.\end{rem} %\newpage

\section{Proofs of the side results}\label{sec: 8}
%%%%%%%%%%%%%%%%%%%%%%%%%%%%%%%%%%%%

\begin{proof}[Proof of Lemma \ref{L:ON}]
We first verify linear independence of the functions $\varphi_{n,0}$ and $\varphi_{n,k,k'}$, $k=1,\ldots,K_n-1$, $k'=1,\ldots,d$. Assume that $\mu_0 + \sum_{k,k'=1}^{K_n-1,d} \mu_{k,k'} \cdot \varphi_{n,k,k'} \equiv 0$ for some real-valued coefficients $\mu_0$ and $\mu_{k,k'}$; then
\begin{align*}
 0  &= \Big\|\mu_0 + \sum_{k,k'=1}^{K_n-1,d} \mu_{k,k'} \cdot \varphi_{n,k,k'}\Big\|^2_{p_X} \\ & \, \geq \, \rho \Big(\mu_0^2 + 2 \mu_0 \sum_{k,k'=1}^{K_n-1,d} \mu_{k,k'} / K_n + \sum_{k,k'=1}^{K_n-1,d} \mu_{k,k'}^2 / K_n + \sum_{k'_1\neq k'_2}^d \sum_{k_1,k_2=1}^{K_n-1} \mu_{k_1,k'_1} \mu_{k_2,k'_2} /K_n^2\Big) \\
&  =  \rho \Big\{\Big(\mu_0 + \sum_{k,k'=1}^{K_n-1,d} \mu_{k,k'} / K_n\Big)^2 + \sum_{k,k'=1}^{K_n-1,d} \mu_{k,k'}^2 / K_n - \sum_{k'=1}^d \sum_{k_1,k_2=1}^{K_n-1} \mu_{k_1,k'} \mu_{k_2,k'} /K_n^2\Big\} \\
&  =  \rho \Big\{\Big(\mu_0 + \sum_{k,k'=1}^{K_n-1,d} \mu_{k,k'} / K_n\Big)^2 + \sum_{k,k'=1}^{K_n-1,d} \mu_{k,k'}^2 / K_n^2  \\ & \hspace{7.3cm} + \sum_{k'=1}^d \sum_{k_1,k_2=1}^{K_n-1} (\mu_{k_1,k'} - \mu_{k_2,k'})^2 / (2K_n^2)\Big\}\,,
\end{align*}
so that $\mu_{k,k'}=0$ for all $k=1,\ldots,K_n-1$, $k'=1,\ldots,d$, and consequently $\mu_0=0$.\\

We can now start the construction of the basis by choosing $\psi_{n,1} := \varphi_{n,0} \equiv 1$ as an element of $\Phi_n$. Then, for any $k=1,\ldots,K_n-1$ and $k'=1,\ldots,d$,  we define
\begin{align*} \tilde{\psi}_{n,k,k'}(x) := &  K_n^{-1/2} \cdot (1+1/k)^{-1/2} \cdot \Big(\frac{K_n}k \cdot{\bf 1}_{[0,k/K_n)}(x_{k'}) \, - \, K_n\cdot {\bf 1}_{[k/K_n,(k+1)/K_n)}(x_{k'})\Big)\,, \end{align*}
on the domain $x=(x_1,\ldots,x_d)\in [0,1]^d$. Clearly any $\tilde{\psi}_{n,k,k'}$ is located in the linear hull of $\varphi_{n,\kappa,k'}$, $\kappa=0,\ldots,k$, and  thus in $\Phi_n$. By $\tilde{\psi}_{n,j}$, $j=2,\ldots,K_n^*$, we denote some enumeration of $\tilde{\psi}_{n,k,k'}$, $k=1,\ldots,K_n-1$, $k'=1,\ldots,d$. Consider the Gram matrices
\begin{align*}
\mbox{Gr}_n \, &:= \, \big\{\langle \tilde{\psi}_{n,j} \, , \, \tilde{\psi}_{n,j'}\rangle_{p_X}\big\}_{j,j'=1,\ldots,K_n^*}\,,\\
\tilde{\mbox{Gr}}_n \, &:= \, \Big\{\idotsint \tilde{\psi}_{n,j}(x)\, \tilde{\psi}_{n,j'}(x) \, dx\Big\}_{j,j'=1,\ldots,K_n^*}\,,
\end{align*}
where $\tilde{\mbox{Gr}}_n$ turns out to be the $K_n^*\times K_n^*$-identity matrix $I_{K_n^*}$. Recall that a Gram matrix is positive semi-definite, that is, all eigenvalues are nonnegative. Since $p_X \geq \rho$, it follows that the Gram matrix ${\mbox{Gr}}_n -\tilde{\mbox{Gr}}_n$ is positive semi-definite, which implies that the smallest eigenvalue $\underline{d}_{n}$ of $\mbox{Gr}_n$ is bounded from below by $\rho$.

Let $U_n \mbox{Gr}_n U_n^{\top}$ be the spectral decomposition of $\mbox{Gr}_n$, with orthogonal matrix $U_n = (U_{n,k,j})_{k,j=1,\ldots,K_n^*}$ and diagonal matrix $D_n$ with eigenvalues $d_{n,k}$, $k=1,\ldots,K_n^*$. As an immediate consequence, the functions
$$ \psi_{n,j} \, := \, d_{n,j}^{-1/2} \, \sum_{k=1}^{K_n^*} U^\top_{n,k,j} \cdot \tilde{\psi}_{n,k}\,, \qquad j=1,\ldots,K_n^*\,,$$
form an orthonormal basis of $\Phi_n$ with respect to the inner product $\langle \cdot,\cdot \rangle_{p_X}$. Moreover, as $d_{n,j} \geq \underline{d}_n \geq \rho$ and using also the orthogonality of $U_n$, it follows that
\begin{align*}
\sum_{j=1}^{K_n^*} \psi_{n,j}^2(x) & \, \leq \, \rho^{-1} \, \sum_{j=1}^{K_n^*} \Big(\sum_{k=1}^{K_n^*} U^\top_{n,k,j} \cdot \tilde{\psi}_{n,k}(x)\Big)^2 \, = \, \rho^{-1} \, \sum_{k=1}^{K_n^*} \tilde{\psi}_{n,k}^2(x) \\ & \, = \, \rho^{-1} \cdot \Big(1 + \sum_{k',k=1}^{d,K_n-1} \tilde{\psi}_{n,k,k'}^2(x)\Big) \,, \qquad \forall x\in [0,1]^d\,.
\end{align*}
However, since
\begin{align*}
 \sum_{k',k=1}^{d,K_n-1} & \tilde{\psi}_{n,k,k'}^2(x) \\ & \, \leq \, K_n^{-1}  \sum_{k'=1}^d \Big(K_n^2  \sum_{k=1}^{K_n-1} k^{-2}\cdot {\bf 1}_{[0,k/K_n)}(x_{k'})  +  K_n^2 \sum_{k=1}^{K_n-1} {\bf 1}_{[k/K_n,(k+1)/K_n))}(x_{k'})\Big) \\
& \, \leq \, K_n \cdot d \cdot (1 + \pi^2/6)\,
\end{align*}
for all $x\in [0,1]^d$, this completes the proof of the lemma.
\end{proof}

\begin{proof}[Proof of Lemma \ref{L:pilot}] By Parseval's identity, consider that
\begin{align} \nonumber
\mathbb{E} & \|\widehat{g}_{n,1} - g\|_{p_X}^2 \\ \nonumber & = \sum_{\ell=1}^{J_n^*} \mathbb{E} \Big|\frac1m \sum_{j=1}^m \sum_{k,k'=1}^{K_n^*} \langle \psi_{n,\ell}^* , \psi_{n,k}\rangle_{p_X} \psi_{n,k}(X_j) \psi_{n,k'}(X_j) \, \langle g , \psi_{n,k'}\rangle_{p_X} - \langle g,\psi_{n,\ell}^*\rangle_{p_X}\Big|^2 \\ \nonumber & \quad + \, \frac{\sigma^2}m \,\sum_{\ell=1}^{J_n^*}  \mathbb{E} \Big|\sum_{k,k'=1}^{K_n^*} \langle \psi_{n,\ell}^* , \psi_{n,k}\rangle_{p_X} (\widehat{M}_{n,1})^{1/2}_{k,k'} \, \eta_{n,k'}\Big|^2 \\ \label{eq:risk1}
& \quad + \, \sum_{\ell > J_n^*} \langle g , \psi_{n,\ell}^*\rangle_{p_X}^2\,.
\end{align}
As $\psi_{n,\ell}^* \in \Phi_n^* \subseteq \Phi_n$ we have $(\psi_{n,\ell}^*)^{[K_n^*]} = \psi_{n,\ell}^*$ and $\langle g,\psi_{n,\ell}^*\rangle_{p_X} = \langle g^{[K_n^*]} , \psi_{n,\ell}^*\rangle_{p_X}$ and the first term in (\ref{eq:risk1}) equals
\begin{align*}
\sum_{\ell=1}^{J_n^*} \mbox{var} \Big(\frac1m \sum_{j=1}^m \psi_{n,\ell}^*(X_j)\, g^{[K_n^*]}(X_j)\Big) & \, \leq \, \frac1m \, \sum_{\ell=1}^{J_n^*} \mathbb{E} \big|\psi_{n,\ell}^*(X_1)\big|^2 \big|g^{[K_n^*]}(X_1)\big|^2 \\ & \, \leq \, m^{-1} \rho^{-1} \big\{1 + d_n J_n (1+\pi^2/6)\big\} \cdot \|g\|_{p_X}^2 \\
& \, \leq \, m^{-1} \rho^{-1} \big\{1 + d_n J_n (1+\pi^2/6)\big\} \cdot C^2 \, d_n^2 \,,
\end{align*}
using Lemma \ref{L:ON} and (\ref{eq:Hoelder}). As the $\eta_{n,k}$ are i.i.d.~standard Gaussian, the second term in (\ref{eq:risk1}) turns out to be the sum of
\begin{align*}
\frac{\sigma^2}m & \, \sum_{k'=1}^{K_n^*} \mathbb{E} \Big|\sum_{k=1}^{K_n^*} \langle \psi_{n,\ell}^*, \psi_{n,k}\rangle_{p_X} (\widehat{M}_{n,1})_{k,k'}^{1/2}\Big|^2 \\ & \, = \, \frac{\sigma^2}m \, \sum_{k,k'=1}^{K_n^*} \langle \psi_{n,\ell}^* , \psi_{n,k}\rangle_{p_X} \langle \psi_{n,\ell}^*,\psi_{n,k'}\rangle_{p_X} \, \big(\mathbb{E} \widehat{M}_{n,1}\big)_{k,k'} \\ &  \, = \, \frac{\sigma^2}m \sum_{k=1}^{K_n^*} \langle \psi_{n,\ell}^* , \psi_{n,k}\rangle_{p_X}^2 \\
& \, \leq \, \sigma^2 \, m^{-1}\,,
\end{align*}
over $\ell=1,\ldots,J_n^*$. Finally, concerning the third term in (\ref{eq:risk1}),
$$ \sum_{\ell>J_n^*} \langle g , \psi_{n,\ell}^*\rangle_{p_X}^2 \, = \, \big\|g - g^{[J_n^*,*]}\big\|_{p_X}^2 \, \leq \, \rho^{-1} \cdot d_n \cdot C^2 \cdot J_n^{-2\beta}\,, $$
where $g^{[J_n^*,*]}$ stands for the orthogonal projection of $g$ onto $\Phi_n^*$ and the upper bound (\ref{eq:2.2}) is applied when $J_n$ replaces $K_n$.

Turning to the statistical risk of $\widehat{g}_{n,2}$ we may employ the analogous upper bound where the first term in (\ref{eq:risk1}) vanishes, $\widehat{M}_{n,1}^{1/2}$ shall be replaced by $I_{K_n^*}$ with respect to the second term, and $m$ must be replaced by $n-m$. \end{proof}

%\smallskip

%%%%%%%%%%%%%%%%%%%%%%%%%%%%%%%%%%%%%%%%%%%%%%
%% Single Appendix:                         %%
%%%%%%%%%%%%%%%%%%%%%%%%%%%%%%%%%%%%%%%%%%%%%%
%\begin{appendix}
%\section*{???}%% if no title is needed, leave empty \section*{}.
%\end{appendix}
%%%%%%%%%%%%%%%%%%%%%%%%%%%%%%%%%%%%%%%%%%%%%%
%% Multiple Appendixes:                     %%
%%%%%%%%%%%%%%%%%%%%%%%%%%%%%%%%%%%%%%%%%%%%%%
%\begin{appendix}
%\section{???}
%
%\section{???}
%
%\end{appendix}

%%%%%%%%%%%%%%%%%%%%%%%%%%%%%%%%%%%%%%%%%%%%%%
%% Support information, if any,             %%
%% should be provided in the                %%
%% Acknowledgements section.                %%
%%%%%%%%%%%%%%%%%%%%%%%%%%%%%%%%%%%%%%%%%%%%%%
%\begin{acks}[Acknowledgments]
% The authors would like to thank ...
%\end{acks}
%%%%%%%%%%%%%%%%%%%%%%%%%%%%%%%%%%%%%%%%%%%%%%
%% Funding information, if any,             %%
%% should be provided in the                %%
%% funding section.                         %%
%%%%%%%%%%%%%%%%%%%%%%%%%%%%%%%%%%%%%%%%%%%%%%
\begin{funding}
M.~Jirak is funded by the Austrian Science Fund (FWF), Project 5485. A.~Meister and A.~Rohde are supported by the Research Unit 5381 (DFG), ME 2114/5-1 and RO 3766/8-1, respectively.
\end{funding}

%%%%%%%%%%%%%%%%%%%%%%%%%%%%%%%%%%%%%%%%%%%%%%
%% Supplementary Material, including data   %%
%% sets and code, should be provided in     %%
%% {supplement} environment with title      %%
%% and short description. It cannot be      %%
%% available exclusively as external link.  %%
%% All Supplementary Material must be       %%
%% available to the reader on Project       %%
%% Euclid with the published article.       %%
%%%%%%%%%%%%%%%%%%%%%%%%%%%%%%%%%%%%%%%%%%%%%%
%\begin{supplement}
%\stitle{???}
%\sdescription{???.}
%\end{supplement}

%%%%%%%%%%%%%%%%%%%%%%%%%%%%%%%%%%%%%%%%%%%%%%%%%%%%%%%%%%%%%
%%                  The Bibliography                       %%
%%                                                         %%
%%  imsart-???.bst  will be used to                        %%
%%  create a .BBL file for submission.                     %%
%%                                                         %%
%%  Note that the displayed Bibliography will not          %%
%%  necessarily be rendered by Latex exactly as specified  %%
%%  in the online Instructions for Authors.                %%
%%                                                         %%
%%  MR numbers will be added by VTeX.                      %%
%%                                                         %%
%%  Use \cite{...} to cite references in text.             %%
%%                                                         %%
%%%%%%%%%%%%%%%%%%%%%%%%%%%%%%%%%%%%%%%%%%%%%%%%%%%%%%%%%%%%%

%% if your bibliography is in bibtex format, uncomment commands:
\bibliographystyle{imsart-nameyear} % Style BST file (imsart-number.bst or imsart-nameyear.bst)
\bibliography{Referenzen}       % Bibliography file (usually '*.bib')

%% or include bibliography directly:
% \begin{thebibliography}{}
% \bibitem{b1}
% \end{thebibliography}

\end{document}